\def\N{{\mathbb{N}}}
\def\R{{\mathbb{R}}}
\begin{document}
\title[Pontryagin principles]{Lightenings of assumptions for Pontryagin principles in infinite horizon and discrete time}

\author[BLOT and NGO]
{Jo\"{e}l BLOT and Thoi Nhan NGO}

\address{Jo\"{e}l Blot: Laboratoire SAMM EA 4543,\newline
Universit\'{e} Paris 1 Panth\'{e}on-Sorbonne, centre P.M.F.,\newline
90 rue de Tolbiac, 75634 Paris cedex 13,
France.}
\email{blot@univ-paris1.fr}
\address{Thoi Nhan Ngo: Laboratoire SAMM EA 4543,\newline
Universit\'{e} Paris 1 Panth\'{e}on-Sorbonne, centre P.M.F.,\newline
90 rue de Tolbiac, 75634 Paris cedex 13,
France.}
\email{ngothoinhan@gmail.com}
\date{January 22, 2016}
\begin{abstract} In the infinite-horizon and discrete-time framework we establish maximum principles of Pontryagin under assumptions which weaker than these ones of existing results. We avoid several assumptions of continuity and of Fr\'echet-differentiability and of linear independence.
\end{abstract}
\maketitle
\numberwithin{equation}{section}
\newtheorem{theorem}{Theorem}[section]
\newtheorem{lemma}[theorem]{Lemma}
\newtheorem{example}[theorem]{Example}
\newtheorem{remark}[theorem]{Remark}
\newtheorem{definition}[theorem]{Definition}
\newtheorem{corollary}[theorem]{Corollary}
\newtheorem{proposition}[theorem]{Proposition}

\noindent
{MSC 2010:}  49J21, 65K05, 39A99.\\
{Key words:} infinite-horizon optimal control, discrete time
\section{Introduction}
The aim of this paper is to establish maximum principles of Pontryagin under assumptions which are weaker than those of existing results. Now we state the considered problems.
\vskip2mm
For all $t \in \N$ let $X_t$ be a nonempty open subset of $\R^n$, $U_t$ be a nonempty subset of $\R^d$, and $f_t : X_t \times U_t \rightarrow X_{t+1}$ be a mapping. We introduce the two following dynamical systems.
\begin{itemize}
\item[(Di)] $x_{t +1} \leq f_t(x_t, u_t)$, $ t \in \N$.
\item[(De)] $x_{t +1} = f_t(x_t, u_t)$, $ t \in \N$.
\end{itemize}
The order in (Di) is the usual order of $\R^n$: when $x = (x^1,...,x^n)$ and $y = (y^1,...,y^n)$ belong to $\R^n$, $x \leq y$ means $x^i \leq y^i$ for all $i \in \{ 1,..., n \}$. We fix $\sigma \in X_0$, and when $k \in \{ i,e \}$, we define ${\rm Adm}_k$ as the set of the $(\underline{x}, \underline{u}) = ((x_t)_{t \in \N}, (u_t)_{t \in \N}) \in \prod_{t \in \N}X_t \times \prod_{ t \in \N} U_t$ such that $(\underline{x}, \underline{u})$ satisfies (Dk) for all $t \in \N$ and such that $x_0 = \sigma$.
\vskip1mm
For all $t \in \N$, we consider the function $\phi_t : X_t \times U_t \rightarrow \R$. When $k \in \{ i,e \}$, we define ${\rm Dom}_k$ as the set of the $(\underline{x}, \underline{u}) \in {\rm Adm}_k$ such that the series $\sum_{t=0}^{+ \infty} \phi_t(x_t,u_t)$ is convergent in $\R$. We define the functional $J : {\rm Dom}_k \rightarrow \R$ by setting $J(\underline{x}, \underline{u}) := \sum_{t=0}^{+ \infty} \phi_t(x_t,u_t)$.
\vskip1mm
When $k \in \{ i,e \}$, we consider the following list of problems.
\begin{itemize}
\item[$(P^1_k)$] Maximize $J(\underline{x}, \underline{u})$ when $(\underline{x}, \underline{u}) \in {\rm Dom}_k$.
\item[$(P^2_k)$] Find $(\underline{\hat{x}}, \underline{\hat{u}}) \in {\rm Adm}_k$ such that, for all $(\underline{x}, \underline{u}) \in {\rm Adm}_k$,\\
$\limsup_{h \rightarrow + \infty}( \sum_{t=0}^h \phi_t(\hat{x}_t, \hat{u}_t) - \sum_{t=0}^h \phi_t(x_t,u_t)) \geq 0$.
\item[$(P^3_k)$] Find $(\underline{\hat{x}}, \underline{\hat{u}}) \in {\rm Adm}_k$ such that, for all $(\underline{x}, \underline{u}) \in {\rm Adm}_k$,\\
$\liminf_{h \rightarrow + \infty}( \sum_{t=0}^h \phi_t(\hat{x}_t, \hat{u}_t) - \sum_{t=0}^h \phi_t(x_t,u_t)) \geq 0$.
\end{itemize}
Now we describe the contents of the paper.
\vskip1mm
In Section 2 we specify notions of differentiability and their notation, and we recall the method of reduction to finite horizon (Theorem \ref{th21}).\\
\indent
In Section 3 we establish weak maximum principles where the values of the optimal control belong to the interior of the sets of controls for system which governed by difference inequations (Theorem \ref{th31} and Theorem \ref{th32}). These results are new and only use the G\^ateaux differentiability of the criterion, of the vector field and of the inequality constraints. Neither continuity nor Fr\'echet differentiability is necessary. These principles use recent results on multipliers rules in static optimization which are established in \cite{Bl4}\\
\indent
In Section 4, we establish a weak maximum principle when the sets of controls are defined by inequalities (Theorem \ref{th43}) when the system is governed by difference inequations. This result also only uses the G\^ateaux differentiability of the criterion, of the vector field and of the inequality constraints and a condition of separation of the origine and of the convex hull of the G\^ateaux differentials of the inequelities constraints in the spirit of a Mangarasian-Fromowitz condition. Secondly we establish a weak maximum principle when the sets of controls are define by equalities and inequalities (Theorem \ref{th47}) when the system is governed by a difference inequation. Such a case is treated in \cite{Bl3} (Theorem 3.1 and Theorem 3.2). In comparison with the result of \cite{Bl3}, the improvements are the following ones: we avoid a condition of continuity for the saturated inequality constraints and for the vector field, we avoid a condition of linear independence of all the differentials of the constraints. A similar result is Theorem \ref{th48} for which the system is governed by a difference equation.
\section{Notation and Recall}
When $E$ and $F$ are finite-dimensional real normed vector spaces, when $A \subset E$, when $\Phi : A \rightarrow F$ is a mapping, and $a \in A$, $\Phi$ is said G\^ateaux differentiable at $a$ when, for all $v \in E$, $\vec{D} \Phi(a,v) := \lim_{s \rightarrow 0} \frac{1}{s}(\Phi(a + s v) - \Phi(a))$ exists for all $v \in E$ and when $v \mapsto \vec{D} \Phi(a,v)$ is linear, then its G\^ateaux differential of $\Phi$ at $a$ is $D_G \Phi(a) \in {\mathcal L}(E,F)$ defined by $D_G \Phi(a)v := \vec{D}\Phi(a,v)$. When it exists, the Fr\'echet differential of $\phi$ at $a$ is denoted by $D \phi(a)$. When $E = E_1 \times E_2$ is a product of normed vector spaces, $a = (a_1,a_2)$, $D_{G,1} \Phi(a_1,a_2)$ (respectively $D_{G,2} \Phi(a_1,a_2)$) is the G\^ateaux differential of $\Phi( \cdot, a_2)$ at $a_1$ (respectively $\Phi(a_1, \cdot)$  at $a_2$) and when $\Phi$ is Fr\'echet differentiable at $(a_1,a_2)$, $D_1 \Phi(a_1,a_2)$ (respectively $D_2 \phi(a_1,a_2)$ denotes the Fr\'echet differential of $\Phi( \cdot, a_2)$ at $a_1$ (respectively $D_2 \Phi(a_1, \cdot)$ at $a_2$). 
\vskip1mm
The method of reduction to finite horizon, which comes from \cite{BCh}, is contained in the following result.
\begin{theorem}\label{th21} The two assertions hold. 
\begin{itemize}
\item[(a)] Let $(\underline{\hat{x}}, \underline{\hat{u}})$ be a solution of $(P^j_i)$ when $j \in \{ 1,2,3 \}$. Then, for all $h \in \N_*$, $(\hat{x}_0,..., \hat{x}_{h+1}, \hat{u}_0,..., \hat{u}_h)$ is a solution of the following finite-horizon problem
\[ (F^h_i) 
\left\{ 
\begin{array}{cl}
{\rm Maximize} & \sum_{t=0}^h \phi_t(x_t, u_t)\\
{\rm when} & (x_0,...,x_{h+1},u_0,...,u_h) \in \prod_{t=0}^{h+1} X_t \times \prod_{t=0}^h U_t\\
\null & x_{t+1} \leq f_t(x_t,u_t) \; {\rm when} \; t \in \{0, ...,h \}\\
\null & x_0 = \sigma, x_{h+1} = \hat{x}_{h+1}.
\end{array}
\right.
\]
\item[(b)] Let $(\underline{\hat{x}}, \underline{\hat{u}})$ be a solution of $(P^j_e)$ when $j \in \{ 1,2,3 \}$. Then, for all $h \in \N_*$, $(\hat{x}_0,..., \hat{x}_{h+1}, \hat{u}_0,..., \hat{u}_h)$ is a solution of the following finite-horizon problem
\[ (F^h_e) 
\left\{ 
\begin{array}{cl}
{\rm Maximize} & \sum_{t=0}^h \phi_t(x_t, u_t)\\
{\rm when} & (x_0,...,x_{h+1},u_0,...,u_h) \in \prod_{t=0}^{h+1} X_t \times \prod_{t=0}^h U_t\\
\null & x_{t+1} = f_t(x_t,u_t) \; {\rm when} \; t \in \{0, ...,h \}\\
\null & x_0 = \sigma, x_{h+1} = \hat{x}_{h+1}.
\end{array}
\right.
\]
\end{itemize}
\end{theorem}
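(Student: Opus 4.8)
The plan is to prove both (a) and (b) by the same elementary argument, since the only difference is whether the dynamics are governed by $(Di)$ or $(De)$; I will write the details for (a) and note that (b) is identical. Fix a solution $(\underline{\hat{x}}, \underline{\hat{u}})$ of $(P^j_i)$ and fix $h \in \N_*$. First I would observe that the truncation $(\hat{x}_0, \dots, \hat{x}_{h+1}, \hat{u}_0, \dots, \hat{u}_h)$ is feasible for $(F^h_i)$: indeed $(\underline{\hat{x}}, \underline{\hat{u}}) \in {\rm Adm}_i$ gives $\hat{x}_{t+1} \leq f_t(\hat{x}_t, \hat{u}_t)$ for every $t \in \N$, in particular for $t \in \{0, \dots, h\}$, and $\hat{x}_0 = \sigma$; the endpoint condition $x_{h+1} = \hat{x}_{h+1}$ is satisfied trivially.

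Next, take any competitor $(x_0, \dots, x_{h+1}, u_0, \dots, u_h)$ feasible for $(F^h_i)$. The key step is to extend it to an infinite admissible process by gluing on the tail of $(\underline{\hat{x}}, \underline{\hat{u}})$: define $\tilde{x}_t := x_t$ and $\tilde{u}_t := u_t$ for $t \leq h$, and $\tilde{x}_t := \hat{x}_t$, $\tilde{u}_t := \hat{u}_t$ for $t \geq h+1$. The junction works because $x_{h+1} = \hat{x}_{h+1}$, so the inequality $\tilde{x}_{h+1} \leq f_h(\tilde{x}_h, \tilde{u}_h)$ is just the constraint at $t=h$ in $(F^h_i)$, while for $t \geq h+1$ the inequalities $\tilde{x}_{t+1} \leq f_t(\tilde{x}_t, \tilde{u}_t)$ coincide with those satisfied by $(\underline{\hat{x}}, \underline{\hat{u}})$; and $\tilde{x}_0 = \sigma$. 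Hence $(\underline{\tilde{x}}, \underline{\tilde{u}}) \in {\rm Adm}_i$.

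The remaining step is to transfer the optimality. For $(P^2_i)$ and $(P^3_i)$ one works directly with the partial sums: since $\tilde{x}_t = \hat{x}_t$ and $\tilde{u}_t = \hat{u}_t$ for $t \geq h+1$, for every $\ell \geq h$ we have $\sum_{t=0}^{\ell} \phi_t(\hat{x}_t, \hat{u}_t) - \sum_{t=0}^{\ell} \phi_t(\tilde{x}_t, \tilde{u}_t) = \sum_{t=0}^{h} \phi_t(\hat{x}_t, \hat{u}_t) - \sum_{t=0}^{h} \phi_t(x_t, u_t)$, a constant in $\ell$; taking $\limsup$ (resp. $\liminf$) as $\ell \to +\infty$ and using that $(\underline{\hat{x}}, \underline{\hat{u}})$ solves $(P^2_i)$ (resp. $(P^3_i)$) against the competitor $(\underline{\tilde{x}}, \underline{\tilde{u}})$ yields $\sum_{t=0}^{h} \phi_t(\hat{x}_t, \hat{u}_t) \geq \sum_{t=0}^{h} \phi_t(x_t, u_t)$, which is exactly optimality in $(F^h_i)$. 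For $(P^1_i)$ one first checks that $(\underline{\tilde{x}}, \underline{\tilde{u}}) \in {\rm Dom}_i$: the series $\sum_t \phi_t(\tilde{x}_t, \tilde{u}_t)$ differs from $\sum_t \phi_t(\hat{x}_t, \hat{u}_t)$ only in the first $h+1$ terms, hence converges because $(\underline{\hat{x}}, \underline{\hat{u}}) \in {\rm Dom}_i$; then $J(\underline{\hat{x}}, \underline{\hat{u}}) \geq J(\underline{\tilde{x}}, \underline{\tilde{u}})$ and subtracting the common tail $\sum_{t=h+1}^{\infty} \phi_t(\hat{x}_t, \hat{u}_t)$ from both sides gives again $\sum_{t=0}^{h} \phi_t(\hat{x}_t, \hat{u}_t) \geq \sum_{t=0}^{h} \phi_t(x_t, u_t)$. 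There is no real obstacle here; the only point deserving care is the well-definedness of the glued tail for $(P^1_i)$, i.e. verifying membership in ${\rm Dom}_i$ rather than merely ${\rm Adm}_i$, and checking that the junction at $t = h$ is compatible with the dynamics — both of which are immediate from $x_{h+1} = \hat{x}_{h+1}$.
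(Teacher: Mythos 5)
Your proof is correct, and it is exactly the standard truncation-and-gluing argument: the paper itself gives no proof of Theorem \ref{th21}, referring instead to \cite{BCh} and \cite{BH}, where this same reduction (extend a finite-horizon competitor by the optimal tail, using $x_{h+1}=\hat{x}_{h+1}$ at the junction, then compare partial sums or, for $(P^1_k)$, subtract the common tail after checking membership in ${\rm Dom}_k$) is the proof. Your handling of all three optimality notions $j\in\{1,2,3\}$ and of both the inequality and equality dynamics is accurate, so nothing is missing.
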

The proof of this theorem is given in \cite{BCh} and in \cite{BH}. Note that this result does not require any special assumption.
\section{Weak Pontryagin principles with interior optimal controls}
In this section we consider the case where values of the optimal control sequence belong to the topological interior of the set $U_t$ of the considered controls at each time $t$, and where the system is governed by the difference inequation (Di).
\begin{theorem}\label{th31} Let $(\underline{\hat{x}}, \underline{\hat{u}})$ be a solution of $(P^j_i)$ when $j \in \{ 1,2,3 \}$. We assume that the following assumptions are fulfilled.
\begin{itemize}
\item[(i)] For all $t \in \N$, $\hat{u}_t \in {\rm int}U_t$.
\item[(ii)]  For all $t \in \N$, $\phi_t$ and $f_t$ are G\^ateaux differentiable at $(\hat{x}_t, \hat{u}_t)$.
\item[(iii)]  For all $t \in \N$, for all $\alpha \in \{ 1,...,n \}$, $f^{\alpha}_t$ is lower semicontinuous at $(\hat{x}_t, \hat{u}_t)$ when $f^{\alpha}_t(\hat{x}_t, \hat{u}_t) > \hat{x}_{t+1}^{\alpha}$.
\item[(iv)] For all $t \in \N$, $D_{G,1}f_t(\hat{x}_t, \hat{u}_t)$ is invertible.
\end{itemize}
Then there exist $\lambda_0 \in \R$ and $ (p_t)_{t \in \N_*} \in (\R^{n*})^{\N_*}$ which satisfy the following properties.
\begin{itemize}
\item[(NN)] $(\lambda_0, p_1) \neq (0,0)$.
\item[(Si)] $\lambda_0 \geq 0$ and, for all $t \in \N_*$, $p_t \geq 0$.
\item[(S${\ell})$] For all $t \in \N$, for all $\alpha \in \{1, ...,n \}$, $p^{\alpha}_{t+1} \cdot( f^{\alpha}_t(\hat{x}_t, \hat{u}_t) - \hat{x}^{\alpha}_{t+ 1}) = 0$.
\item[(AE)] For all $t \in \N_*$, $p_t = p_{t+1} \circ D_{G,1}f_t(\hat{x}_t, \hat{u}_t) + \lambda_0 D_{G,1} \phi_t(\hat{x}_t, \hat{u}_t)$.
\item[(WM)] For all $t \in \N$, $ p_{t+1} \circ D_{G,2}f_t(\hat{x}_t, \hat{u}_t) + \lambda_0 D_{G,2} \phi_t(\hat{x}_t, \hat{u}_t) = 0$. 
\end{itemize}
\end{theorem}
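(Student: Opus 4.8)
The plan is to apply the finite-horizon reduction from Theorem~\ref{th21}(a) and then invoke a static multiplier rule at each truncation level, obtaining a sequence of finite-horizon multipliers which we then renormalize and pass to the limit. First I would fix $h \in \N_*$ and apply Theorem~\ref{th21}(a): the truncated trajectory $(\hat{x}_0,\dots,\hat{x}_{h+1},\hat{u}_0,\dots,\hat{u}_h)$ solves $(F^h_i)$. Since $\hat{u}_t \in {\rm int}\,U_t$ for every $t$, in problem $(F^h_i)$ the control variables are unconstrained near the optimum, while the state variables $x_1,\dots,x_h$ range in the open sets $X_t$ and the constraints are the inequalities $x_{t+1} - f_t(x_t,u_t) \le 0$ for $t \in \{0,\dots,h\}$ (the endpoint conditions $x_0 = \sigma$, $x_{h+1} = \hat{x}_{h+1}$ being fixed data). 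Using assumption (ii) (G\^ateaux differentiability) and the static multiplier rule of \cite{Bl4}, there exist $\lambda_0^h \ge 0$ and covectors $p_1^h,\dots,p_{h+1}^h \in \R^{n*}$, not all zero, with $p_t^h \ge 0$, satisfying the complementary slackness (S$\ell$) componentwise for $t \le h$, the adjoint equation (AE) for $t \in \{1,\dots,h\}$, and the weak-maximum relation (WM) for $t \in \{0,\dots,h\}$; here assumption (iii), the lower semicontinuity of the saturated components of $f_t$, is what licenses the one-sided (inequality) form of the multiplier rule.

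Next I would address the normalization and the diagonal extraction. The multiplier rule gives a nonzero vector $(\lambda_0^h, p_1^h, \dots, p_{h+1}^h)$; I would like to normalize by $\lvert \lambda_0^h \rvert + \lvert p_1^h \rvert = 1$, so I must first check this quantity is nonzero. This is exactly where assumption (iv) enters: if $\lambda_0^h = 0$ and $p_1^h = 0$, then the adjoint equation (AE) at $t=1$, namely $p_1^h = p_2^h \circ D_{G,1}f_1(\hat{x}_1,\hat{u}_1) + \lambda_0^h D_{G,1}\phi_1(\hat{x}_1,\hat{u}_1)$, forces $p_2^h \circ D_{G,1}f_1(\hat{x}_1,\hat{u}_1) = 0$, and invertibility of $D_{G,1}f_1$ then gives $p_2^h = 0$; iterating up to $t = h+1$ kills every $p_t^h$, contradicting nontriviality. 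So after this normalization $(\lambda_0^h, p_1^h)$ lies on the unit sphere, hence is bounded; and then (AE) read forward, $p_{t+1}^h = (p_t^h - \lambda_0^h D_{G,1}\phi_t(\hat{x}_t,\hat{u}_t)) \circ D_{G,1}f_t(\hat{x}_t,\hat{u}_t)^{-1}$, shows recursively that for each fixed $t$ the sequence $(p_t^h)_{h \ge t}$ is bounded (the bound depends on $t$ but not on $h$). A diagonal argument then extracts a subsequence of $h$'s along which $\lambda_0^h \to \lambda_0$ and $p_t^h \to p_t$ for every $t \in \N_*$ simultaneously.

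Finally I would verify that the limit multipliers $(\lambda_0, (p_t)_{t \in \N_*})$ satisfy (NN) through (WM). (NN) is immediate since $\lvert \lambda_0 \rvert + \lvert p_1 \rvert = 1$ by passing the normalization to the limit; (Si) survives because $\lambda_0^h \ge 0$ and $p_t^h \ge 0$ are closed conditions; and (S$\ell$), (AE), (WM) each involve, at a fixed $t$, only finitely many of the $p^h$'s together with the fixed linear maps $D_{G,1}f_t(\hat{x}_t,\hat{u}_t)$, $D_{G,2}f_t(\hat{x}_t,\hat{u}_t)$, $D_{G,1}\phi_t(\hat{x}_t,\hat{u}_t)$, $D_{G,2}\phi_t(\hat{x}_t,\hat{u}_t)$ and the fixed data $f_t^\alpha(\hat{x}_t,\hat{u}_t)$, $\hat{x}_{t+1}^\alpha$, so they are preserved under the pointwise limit; for each relation one just chooses $h$ large enough (along the chosen subsequence) that the relation holds at level $h$ and lets $h \to \infty$. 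The main obstacle I anticipate is the passage to the limit giving a \emph{nontrivial} multiplier: one needs the uniform (in $h$) normalization at the fixed index $1$ to be legitimate, and this rests entirely on the backward propagation argument powered by assumption (iv) — without invertibility of $D_{G,1}f_t$, the finite-horizon multipliers could all concentrate on high indices $p_{h+1}^h$ and the limit at index $1$ could vanish. A secondary technical point is ensuring the static multiplier rule of \cite{Bl4} applies with only G\^ateaux (not Fr\'echet) differentiability plus the lower semicontinuity hypothesis (iii), which is precisely the improvement over earlier treatments and must be cited carefully.
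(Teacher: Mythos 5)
Your proposal is correct and follows essentially the same route as the paper's proof: finite-horizon reduction via Theorem~\ref{th21}(a), the multiplier rule of \cite{Bl4} at each horizon $h$, assumption (iv) used through the adjoint equation to exclude $(\lambda_0^h,p_1^h)=(0,0)$, the normalization $\vert\lambda_0^h\vert+\Vert p_1^h\Vert=1$ justified by the cone property of the multipliers, and passage to the limit of all relations at each fixed $t$. The only (inessential) difference is that the paper needs no diagonal extraction: once $(\lambda_0^{\varphi(h)},p_1^{\varphi(h)})$ converges, the forward recursion $p_{t+1}^h=(p_t^h-\lambda_0^h D_{G,1}\phi_t(\hat{x}_t,\hat{u}_t))\circ D_{G,1}f_t(\hat{x}_t,\hat{u}_t)^{-1}$ forces convergence of every $p_t^{\varphi(h)}$ along that same subsequence, whereas your boundedness-plus-Cantor-diagonal argument (which the paper reserves for Theorem~\ref{th32}) is also valid.
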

\begin{proof} Using Theorem \ref{th21} we can assert that, for all $h \in \N_*$, $(\hat{x}_0, ..., \hat{x}_{h+1}, \hat{u}_0,..., \hat{u}_h)$ is a solution of $(F^h_i)$.
\vskip1mm
We introduce the function $\psi : \prod_{t = 1}^h X_t \times \prod_{t = 0}^h U_t \rightarrow \R$ by setting 
$$\psi(x_1,...,x_h, u_0,...,u_h) := \phi_0(\sigma, u_0) + \sum_{t=1}^h \phi_t(x_t,u_t).$$
We introduce the mapping $\psi_0 : \prod_{t = 1}^h X_t \times \prod_{t=0}^h U_t \rightarrow \R^n$ by setting\\
$\psi_0(x_1,...,x_h, u_0,...,u_h) := f_0(\sigma, u_0) - x_1$. For all $t \in \{ 1,..., h-1 \}$ we introduce the mapping $\psi_t : \prod_{t = 1}^h X_t \times \prod_{t=0}^h U_t \rightarrow \R^n$ by setting
$$\psi_t(x_1,...,x_h, u_0,...,u_h) := f_t(x_t, u_t) - x_{t+1}.$$ 
We introduce the mapping $\psi_h : \prod_{t = 1}^h X_t \times \prod_{t=0}^h U_t \rightarrow \R^n$ by setting 
$$\psi_h(x_1,...,x_h, u_0,...,u_h) := f_h(x_h, u_h) - \hat{x}_{h+1}.$$
Then we can formulate $(F^h_i)$ in the following form.
\begin{equation}\label{eq31}
\left.
\begin{array}{cl}
{\rm Maximize} & \psi(x_1,...,x_h, u_0,...,u_h)\\
{\rm when} & \forall t \in \{1,...,h \}, x_t \in X_t, \;  u_t \in U_t \\
\null & \forall  t \in \{0,...,h \}, \forall \alpha \in \{1,...,n \}, \psi^{\alpha}_t(x_1,...,x_h, u_0,...,u_h) \geq 0
\end{array}
\right\}
\end{equation}
where the $\psi^{\alpha}_t$ are the coordinates of $\psi_t$.\\
Our assumptions (i, ii, iii) imply that the assumptions of Theorem 3.1 in \cite{Bl4} are fulfilled and so we know that, for all $h \in \N_*$, there exists $(\lambda_0^h, p_1^h, ..., p^h_{h+1}) \in \R \times (\R^{n*})^{h+1}$ which satisfies the following conditions.
\begin{equation}\label{eq32}
(\lambda_0^h, p_1^h, ..., p^h_{h+1})  \neq (0,0,...,0).
\end{equation}
\begin{equation}\label{eq33}
\lambda_0^h \geq 0, {\rm and} \; \forall t \in \{1,..., h+1 \},  p_t^h \geq 0.
\end{equation}
\begin{equation}\label{eq34}
\forall t \in \{0,...,h \}, \forall \alpha \in \{1,...,n \}, p_{t+1,\alpha}^h \cdot (f_t^{\alpha}(\hat{x}_t, \hat{u}_t) - \hat{x}^{\alpha}_{t+1}) = 0.
\end{equation}
\begin{equation}\label{eq35}
\forall t \in \{1,...,h \}, p_t^h = p_{t+1}^h \circ D_{G,1}f_t(\hat{x}_t, \hat{u}_t) + \lambda_0^h D_{G,1} \phi_t(\hat{x}_t, \hat{u}_t).
\end{equation}
\begin{equation}\label{eq36}
\forall t \in \{1,...,h \},  p_{t+1}^h \circ D_{G,2}f_t(\hat{x}_t, \hat{u}_t) + \lambda_0^h D_{G,2} \phi_t(\hat{x}_t, \hat{u}_t) = 0.
\end{equation}
Using assumption (iv) we can formulate (\ref{eq35}) as follows.
\begin{equation}\label{eq37}
\forall t \in \{1,...,h \}, p^h_{t+1} = (p_t^h - \lambda_0^h D_{G,1} \phi_t(\hat{x}_t, \hat{u}_t)) \circ  D_{G,1}f_t(\hat{x}_t, \hat{u}_t)^{-1}.
\end{equation}
From this last equation we easily see that $(\lambda_0^h, p_1^h) = (0,0) \Longrightarrow (\lambda_0^h, p_i^h, ..., p^h_{h+1})  = (0,0,...,0)$ and then from (\ref{eq32}) we can assert that 
\begin{equation}\label{eq38}
(\lambda_0^h, p_1^h) \neq (0,0).
\end{equation}
Since the set of the lists of multipliers of Problem (\ref{eq31}) is a cone, we can normalize the multipliers by setting 
\begin{equation}\label{eq39}
\vert \lambda_0^h \vert + \Vert p_1^h \Vert = 1.
\end{equation}
Since the values of the sequence $(\lambda_0^h, p_1^h)_{h \in \N_*}$ belong to the unit sphere of $\R \times \R^{n*}$ which is compact, using the Bolzano-Weierstrass theorem we can say that there exist an increasing function $\varphi : \N_* \rightarrow \N_*$ and $(\lambda_0, p_1) \in \R \times \R^{n*}$ such that $\vert \lambda_0 \vert + \Vert p_1 \Vert = 1$, $\lim_{h \rightarrow + \infty} \lambda^{\varphi (h)}_0 = \lambda_0$ and $\lim_{h \rightarrow + \infty} p^{\varphi (h)}_1 = p_1$. 
\vskip1mm
Note that $p_2^{\varphi (h)} = (p_1^{\varphi (h)} - \lambda_0^{\varphi (h)} D_{G,1} \phi_1(\hat{x}_1, \hat{u}_1))\circ D_{G,1} f_1(\hat{x}_1, \hat{u}_1)^{-1}$ for all $h \geq t-1$, which implies that 
$$p_2 := \lim_{h \rightarrow + \infty} p^{\varphi (h)}_2 = (p_1 - \lambda_0  D_{G,1} \phi_1(\hat{x}_1, \hat{u}_1))\circ D_{G,1} f_1(\hat{x}_1, \hat{u}_1)^{-1}.$$
 Proceeding recursively we define, for all $t \in \N_*$, $p_{t+1} := \lim_{h \rightarrow + \infty} p^{\varphi (h)}_{t+1} =$ \\ $ \lim_{h \rightarrow + \infty} (p_t^{\varphi(h)} - \lambda_0^{\varphi(h)}  D_{G,1} \phi_t(\hat{x}_t, \hat{u}_t))\circ D_{G,1} f_t(\hat{x}_t, \hat{u}_t)^{-1} = 
(p_t - \lambda_0  D_{G,1} \phi_t(\hat{x}_t, \hat{u}_t))\circ D_{G,1} f_t(\hat{x}_t, \hat{u}_t)^{-1}$. And so we have built $\lambda_0 \in \R$ and a sequence $(p_t)_{t \in \N_*} \in (\R^{n*})^{\N_*}$ which satisfies (AE).
\vskip1mm
We have yet seen that (NN) is satisfied. From (\ref{eq33}) we obtain (Si). From (\ref{eq34}) we obtain (S${\ell}$). From (\ref{eq36}) we obtain (WM).
\end{proof}
\begin{theorem}\label{th32}
Let $(\underline{\hat{x}}, \underline{\hat{u}})$ be a solution of $(P^j_i)$ with $j \in \{ 1,2,3 \}$. We assume that the assumption (i,ii,iii) of Theorem \ref{th31} are fulfilled. Moreover we assume that the following assumption is fulfilled.
\begin{itemize}
\item[(v)] For all $t \in \N_*$, for all $\alpha, \beta \in \{ 1,...,n \}$, $\frac{\partial f^{\alpha}_t(\hat{x}_t, \hat{u}_t)}{\partial x^{\beta}} \geq 0$ and for all $\alpha \in \{ 1,...,n \}$, $\frac{\partial f^{\alpha}_t(\hat{x}_t, \hat{u}_t)}{\partial x^{\alpha}} > 0$.
\end{itemize}
Then the conclusions of Theorem \ref{th31} hold.
\end{theorem}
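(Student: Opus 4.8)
The plan is to follow the proof of Theorem \ref{th31} verbatim up to and including the construction of the finite-horizon multipliers, and then to replace the role of the invertibility assumption (iv) — which in Theorem \ref{th31} served both to guarantee $(\lambda_0^h, p_1^h) \neq (0,0)$ and to control the tail $(p_t^h)_{t}$ via the explicit recursion (\ref{eq37}) — by an a priori estimate deduced from the sign conditions (v). First, since the hypotheses of Theorem 3.1 in \cite{Bl4} follow from (i,ii,iii), I invoke Theorem \ref{th21} and that result exactly as in the proof of Theorem \ref{th31} to obtain, for every $h \in \N_*$, a list $(\lambda_0^h, p_1^h, \ldots, p_{h+1}^h) \in \R \times (\R^{n*})^{h+1}$ satisfying (\ref{eq32})--(\ref{eq36}).

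The key step is an estimate. Write $M_t := D_{G,1} f_t(\hat{x}_t, \hat{u}_t)$, whose matrix $((M_t)_{\alpha\beta})$ has entries $(M_t)_{\alpha\beta} = \frac{\partial f_t^{\alpha}(\hat{x}_t, \hat{u}_t)}{\partial x^{\beta}} \geq 0$ and strictly positive diagonal entries $(M_t)_{\beta\beta} > 0$ by assumption (v). Reading (\ref{eq35}) componentwise, for $t \in \{1, \ldots, h\}$ and $\beta \in \{1, \ldots, n\}$ we have $\sum_{\alpha=1}^n (p_{t+1}^h)_{\alpha} (M_t)_{\alpha\beta} = (p_t^h)_{\beta} - \lambda_0^h (D_{G,1}\phi_t(\hat{x}_t, \hat{u}_t))_{\beta}$. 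Since $p_{t+1}^h \geq 0$ by (\ref{eq33}) and every entry of $M_t$ is $\geq 0$, the left-hand side is $\geq (p_{t+1}^h)_{\beta}(M_t)_{\beta\beta}$, whence
$$ (p_{t+1}^h)_{\beta} (M_t)_{\beta\beta} \leq (p_t^h)_{\beta} + \vert \lambda_0^h \vert \cdot \Vert D_{G,1}\phi_t(\hat{x}_t, \hat{u}_t) \Vert. $$
Dividing by $(M_t)_{\beta\beta} > 0$, taking the maximum over $\beta$, and using the equivalence of norms on $\R^{n*}$, I obtain a constant $C_t > 0$ depending only on $M_t$ and $D_{G,1}\phi_t(\hat{x}_t, \hat{u}_t)$ — in particular \emph{not} on $h$ — such that $\Vert p_{t+1}^h \Vert \leq C_t(\Vert p_t^h \Vert + \vert \lambda_0^h \vert)$ for every $h \geq t$.

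From this estimate I first recover (\ref{eq38}): if $(\lambda_0^h, p_1^h) = (0,0)$, then by induction on $t$ (using $(M_t)_{\beta\beta} > 0$ together with $p_{t+1}^h \geq 0$) one gets $p_t^h = 0$ for every $t \in \{1, \ldots, h+1\}$, contradicting (\ref{eq32}); hence we may normalize by $\vert \lambda_0^h \vert + \Vert p_1^h \Vert = 1$ as in (\ref{eq39}). With this normalization, an induction on $t$ using the estimate shows that for each $t \in \N_*$ there is $B_t > 0$, independent of $h$, with $\Vert p_t^h \Vert \leq B_t$ for all $h \geq t-1$. A Cantor diagonal extraction then applies: by the Bolzano--Weierstrass theorem extract a subsequence along which $(\lambda_0^h, p_1^h)$ converges, refine it successively so that $p_2^h$, then $p_3^h$, and so on converge, and pass to the diagonal. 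This yields an increasing $\varphi : \N_* \rightarrow \N_*$, a pair $(\lambda_0, p_1) \in \R \times \R^{n*}$ with $\vert \lambda_0 \vert + \Vert p_1 \Vert = 1$, and a sequence $(p_t)_{t \in \N_*} \in (\R^{n*})^{\N_*}$, such that $\lambda_0^{\varphi(h)} \rightarrow \lambda_0$ and $p_t^{\varphi(h)} \rightarrow p_t$ for every $t \in \N_*$.

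It then remains to pass to the limit along $\varphi$ in (\ref{eq32})--(\ref{eq36}): for each fixed $t$ these identities hold as soon as $h$ is large enough, and the maps $q \mapsto q \circ M_t$ and multiplication by a scalar are continuous. The normalization gives (NN); (\ref{eq33}) gives (Si); (\ref{eq34}) gives (S$\ell$); (\ref{eq35}) gives (AE); and (\ref{eq36}) gives (WM). I expect the uniform-in-$h$ estimate of the second paragraph to be the only genuine obstacle: once it is in place the remainder is the same compactness-plus-limit argument as in Theorem \ref{th31}, the point being that the nonnegativity and positive-diagonal conditions (v) control the growth of the adjoint sequence in place of the invertibility condition (iv).
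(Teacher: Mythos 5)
Your proposal is correct and follows essentially the same route as the paper: a uniform-in-$h$ bound on $\Vert p_t^h\Vert$ deduced from the nonnegativity of the off-diagonal and positivity of the diagonal entries of $D_{G,1}f_t(\hat{x}_t,\hat{u}_t)$, followed by a Cantor diagonal extraction and passage to the limit in (\ref{eq32})--(\ref{eq36}); your componentwise estimate is just a direct rewriting of the paper's inequality $\pi\circ D_{G,1}f_t(\hat{x}_t,\hat{u}_t)\geq\gamma_t\pi$ obtained via Lemma 2.3 of \cite{BH}. A small merit of your write-up is that you explicitly re-derive $(\lambda_0^h,p_1^h)\neq(0,0)$ from the sign conditions (the paper needs this for the normalization (\ref{eq39}) but leaves the verification implicit, since the argument of Theorem \ref{th31} used assumption (iv)).
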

\begin{proof}
Proceeding as in the proof of Theorem \ref{th31} we obtain that, for all 
$h \in \N_*$, $(\hat{x}_0,...,\hat{x}_{h+1}, \hat{u}_0,...,\hat{u}_h)$ is a solution of problem (\ref{eq31}) and conditions (\ref{eq32} - \ref{eq37}) are fulfilled.
\vskip1mm
For all $t \in \N_*$ we define $\gamma_t := \min_{1 \leq \alpha \leq n}  \frac{\partial f^{\alpha}_t(\hat{x}_t, \hat{u}_t)}{\partial x^{\alpha}}  \in (0, + \infty)$.
\vskip1mm
Under assumption (v), when $v \in \R^n$, for all $\alpha \in \{1,...,n \}$, we have 
$$(D_{G,1} f_t(\hat{x}_t, \hat{u}_t) \cdot v)^{\alpha} = \sum_{\beta = 1}^n \frac{\partial f^{\alpha}_t(\hat{x}_t, \hat{u}_t)}{\partial x^{\beta}} v^{\beta} \geq \frac{\partial f^{\alpha}_t(\hat{x}_t, \hat{u}_t)}{\partial x^{\alpha}} v^{\alpha} \geq \gamma_t  v^{\alpha}$$
which implies $D_{G,1} f_t(\hat{x}_t, \hat{u}_t) \cdot v \geq \gamma_t v$. Then using Lemma 2.3 in \cite{BH} (p. 37) we can assert that, for all $\pi \in (\R^{n*})_+$, $\pi \circ D_{G,1} f_t(\hat{x}_t, \hat{u}_t) \geq \gamma_t \pi$. Then we have, for all $t \in \{1,...,h \}$,
$$p_t^h  =  p^h_{t + 1} \circ D_{G,1} f_t(\hat{x}_t, \hat{u}_t) + \lambda_0^h D_{G,1} \phi_t (\hat{x}_t, \hat{u}_t)
 \geq  \gamma_t  p^h_{t + 1} +  \lambda_0^h D_{G,1} \phi_t (\hat{x}_t, \hat{u}_t)$$
which implies $0 \leq p^h_{t + 1} \leq \frac{1}{\gamma_t} (p^h_t - \lambda_0^h D_{G,1} \phi_t (\hat{x}_t, \hat{u}_t)$ which implies
$$\Vert 
p^h_{t + 1} \Vert \leq \frac{1}{\gamma_t} (\Vert p^h_t \Vert + \lambda_0^h \Vert D_{G,1} \phi_t (\hat{x}_t, \hat{u}_t) \Vert)$$
 which implies, since from (\ref{eq33}) and (\ref{eq39}) we have $\lambda_0^h = \vert \lambda_0^h \vert \leq 1$, the following relation holds for all $t \in \N_*$ and for all $h \geq t-1$.
\begin{equation}\label{eq310}
\Vert  p^h_{t + 1} \Vert \leq \frac{1}{\gamma_t} (\Vert p^h_t \Vert + \Vert D_{G,1} \phi_t (\hat{x}_t, \hat{u}_t) \Vert).
\end{equation}
Now we want to prove the following assertion.
\begin{equation}\label{eq311}
\forall t \in \N_*, \exists \zeta_t \in (0, + \infty), \forall h \geq t-1, \;  \Vert p^h_t \Vert \leq \zeta_t.
\end{equation}
We proceed by induction. When $t= 1$, from (\ref{eq39}) we know that $ \Vert p^h_t \Vert \leq 1$, and so it suffices to take $\zeta_1 := 1$. We assume that (\ref{eq311}) holds for $t$, then for $t+1$, from (\ref{eq310}) we obtain
$$\Vert  p^h_{t + 1} \Vert \leq \frac{1}{\gamma_t} ( \zeta_t + \Vert D_{G,1} \phi_t (\hat{x}_t, \hat{u}_t) \Vert) =: \zeta_{t+1}$$
and so (\ref{eq311}) is proven.
\vskip1mm
Using (\ref{eq311}) and the diagonal process of Cantor as it is formulated in \cite{BH} (Theorem A.1, p. 94), we can assert that there exists an increasing function $\rho : \N_* \rightarrow \N_*$ and a sequence $(p_t)_{t \in \N_*} \in (\R^{n*}_+)^{\N_*}$ such that, for all $t \in \N_*$, $\lim_{h \rightarrow + \infty} p^{\rho(h)}_t = p_t$. Now we conclude as in the proof of Theorem \ref{th31}.
\end{proof} 
\section{Weak Pontryagin principles with constrained controls}
In this section we first consider the case where the sets of controls are defined by inequalities for each $t \in \N$.
\begin{equation}\label{eq41}
U_t = \bigcap_{1 \leq k \leq m} \{ u \in \R^d : g^k_t(u) \geq 0 \}
\end{equation}
where $g^k_t : \R^d \rightarrow \R$.
\begin{lemma}\label{lem41}
Let $E$ be a finite-dimensional real normed vector space and $I$ be a nonempty finite set.
Let $(\varphi_i)_{i \in I} \in (E^*)^I$. The three following assertions are equivalent.
\begin{itemize}
\item[(i)] $0 \notin co \{ \varphi_i : i \in I \}$.
\item[(ii)] For all $(\lambda_i)_{i \in I} \in (\R_+)^I$, $\sum_{i \in I} \lambda_i \varphi_i = 0 \Longrightarrow$  $ \lambda_i = 0$ for all $i \in I$.
\item[(iii)] There exists $w \in E$ such that, $\langle \varphi_i, w \rangle > 0$  for all $i \in I$.
\end{itemize}
\end{lemma}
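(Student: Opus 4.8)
The plan is to prove the cycle of implications $(i)\Rightarrow(iii)\Rightarrow(ii)\Rightarrow(i)$, which is the most economical way to get equivalence of the three statements. Throughout I write $C := \mathrm{co}\{\varphi_i : i \in I\}$, a nonempty compact convex subset of the finite-dimensional space $E^*$ (compact because $I$ is finite, so $C$ is the convex hull of finitely many points).

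First, $(i)\Rightarrow(iii)$. Assuming $0 \notin C$, I apply the Hahn--Banach separation theorem to separate the compact convex set $C$ from the point $0$: there is a continuous linear functional on $E^*$, which by reflexivity (finite dimension) is evaluation at some $w \in E$, and a real $c$ with $\langle \xi, w\rangle \geq c > 0 = \langle 0, w\rangle$ for all $\xi \in C$. In particular $\langle \varphi_i, w\rangle \geq c > 0$ for every $i \in I$, which is exactly $(iii)$. Next, $(iii)\Rightarrow(ii)$: suppose $w$ satisfies $\langle\varphi_i, w\rangle > 0$ for all $i$, and let $(\lambda_i)_{i\in I} \in (\R_+)^I$ with $\sum_{i\in I}\lambda_i\varphi_i = 0$. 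Evaluating at $w$ gives $\sum_{i\in I}\lambda_i\langle\varphi_i,w\rangle = 0$; since each term is a product of a nonnegative number and a strictly positive number, every term vanishes, hence $\lambda_i = 0$ for all $i \in I$. Finally, $(ii)\Rightarrow(i)$, which I would prove by contraposition: if $0 \in C$, then $0 = \sum_{i\in I}\lambda_i\varphi_i$ for some $(\lambda_i)_{i\in I}\in(\R_+)^I$ with $\sum_{i\in I}\lambda_i = 1$; these $\lambda_i$ are not all zero (they sum to $1$), so $(ii)$ fails.

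I expect the only genuine content to be $(i)\Rightarrow(iii)$, namely invoking strict separation of a point from a compact convex set in a finite-dimensional space; the other two implications are immediate from evaluating at $w$ and from the definition of the convex hull. One small point to state carefully is that the separating functional on $E^*$ is represented by a vector $w\in E$ — this uses that $E$ is finite-dimensional (so $E^{**} = E$), which is part of the hypothesis. No compactness subtlety beyond "convex hull of a finite set is compact" is needed, and in fact one could even separate $0$ from $C$ using only that $C$ is closed and convex and $0\notin C$.
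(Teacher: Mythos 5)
Your proposal is correct and rests on the same key ingredient as the paper's proof: strict Hahn--Banach separation of $0$ from the compact convex hull $co\{\varphi_i : i \in I\}$, with the separating functional realized as evaluation at some $w \in E$ via $E^{**} \cong E$. The only difference is organizational: you close the cycle (i) $\Rightarrow$ (iii) $\Rightarrow$ (ii) $\Rightarrow$ (i), whereas the paper proves (i) $\Leftrightarrow$ (ii) by two contrapositions and (i) $\Leftrightarrow$ (iii) directly; your arrangement is marginally more economical, but the argument for each arrow is essentially the one in the paper.
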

\begin{proof}
First we prove that non(ii) implies non(i).
From non(ii) we deduce that there exists $(\lambda_i)^{i \in I} \in (\R_+)^I$ such that $(\lambda_i)^{i \in I} \neq 0$ and $\sum_{i \in I} (\frac{\lambda_i}{\sum_{j \in I} \lambda_j}) \varphi_i = 0$ which implies  non(i). Secondly we prove that non(i) implies non(ii). From non(i) there exists $(\alpha_i)_{i \in I} \in \R_+Î$ such that $\sum_{i \in I} \alpha_i = 1$ and $0 = \sum_{i \in I} \alpha_i \varphi_i$, and since  $(\alpha_i)_{i \in I}$ is non zero, non(ii) is fulfilled. And so we have proven that non(i) and non(ii) are equivalent.
\vskip1mm
To prove that (i) implies (iii), note that $0 \notin co \{ \varphi_i : i \in I \} =: K$, and $K$ is a nonempty convex compact set. Using the theorem of separation of Hahn-Banach, we can assert that there exist $\xi \in \R^{n**}$ and $a \in (0, + \infty)$ such that $\langle \xi, \varphi \rangle \geq a$ for all $\varphi \in K$, and $\langle \xi, 0 \rangle = 0 < a$. Since $\R^n$ is reflexive, there exists $w \in \R^n$ such $\langle \xi, \varphi \rangle = \langle \varphi, w \rangle$ for all $\varphi \in \R^{n*}$. Therefore for all $i \in I$, we have $\langle \varphi_i, w \rangle \geq a > 0$ that is (iii).
\vskip1mm
To prove that (iii) implies (i) we set $\gamma := \min_{i \in I} \langle \varphi_i, w \rangle > 0$. When $\varphi \in co \{ \varphi_i : i \in I \}$, there exists $(\alpha_i)_{i \in I} \in \R_+Î$ such that $\sum_{i \in I} \alpha_i = 1$ and $\varphi = \sum_{i \in I} \alpha_i \varphi_i$. Then we have $\langle \varphi, w \rangle = \sum_{i \in I} \alpha_i \langle \varphi_i, w \rangle \geq \sum_{i \in I} \alpha_i \gamma = \gamma > 0$ which implies $\varphi \neq 0$, and so (i) is satisfied.  
\end{proof}
\begin{lemma}\label{lem42}
Let $E$ be a finite-dimensional real normed vector space and $I$ be a nonempty finite set.
Let $(\varphi_i)_{i \in I} \in (E^*)^I$ such that $0 \notin co \{ \varphi_i : i \in I \}$. For all $i \in I$, let $(r_i^h)_{h \in \N_*} \in R_+^{\N_*}$. We assume that the sequence $(\psi_h)_{h \in \N_*} := ( \sum_{i \in I} r_i^h \varphi_i)_{h \in \N_*}$ is bounded in $E^*$. \\
Then there exists an increasing function $\rho : \N_* \rightarrow \N_*$ such that, for all $i \in I$, the sequence $(r_i^{\rho(h)})_{h \in \N_*}$ is convergent in $R_+$.
\end{lemma}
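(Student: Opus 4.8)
The plan is to turn the hypotheses into a uniform bound on all the coefficients $r_i^h$ and then to extract a common convergent subsequence by a finite iteration of the Bolzano--Weierstrass theorem.

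First I would apply Lemma \ref{lem41} to the family $(\varphi_i)_{i \in I}$: since $0 \notin co\{\varphi_i : i \in I\}$, assertion (iii) provides a vector $w \in E$ such that $\langle \varphi_i, w \rangle > 0$ for every $i \in I$. Because $I$ is nonempty and finite, $\gamma := \min_{i \in I} \langle \varphi_i, w \rangle$ is a well-defined strictly positive real, and necessarily $w \neq 0$. By hypothesis the sequence $(\psi_h)_{h \in \N_*}$ is bounded in $E^*$, so there exists $M \in (0, + \infty)$ with $\Vert \psi_h \Vert \leq M$ for all $h \in \N_*$.

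Next, for every $h \in \N_*$, using $r_i^h \geq 0$ and $\langle \varphi_i, w \rangle \geq \gamma$, I would write $\gamma \sum_{i \in I} r_i^h \leq \sum_{i \in I} r_i^h \langle \varphi_i, w \rangle = \langle \psi_h, w \rangle \leq \Vert \psi_h \Vert \, \Vert w \Vert \leq M \Vert w \Vert$. Hence $0 \leq r_i^h \leq \sum_{j \in I} r_j^h \leq M \Vert w \Vert / \gamma =: C$ for every $i \in I$ and every $h \in \N_*$; that is, each scalar sequence $(r_i^h)_{h \in \N_*}$ takes its values in the compact interval $[0,C]$.

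Finally I would conclude by a diagonal-type argument over the finite set $I$: writing $I = \{ i_1, \dots, i_N \}$, apply Bolzano--Weierstrass to $(r_{i_1}^h)_{h \in \N_*}$ to obtain a first subsequence along which it converges, then extract a further subsequence making $(r_{i_2}^h)_{h}$ converge, and so on; after $N$ steps one obtains an increasing map $\rho : \N_* \to \N_*$ such that $(r_i^{\rho(h)})_{h \in \N_*}$ converges for every $i \in I$, the limit belonging to $[0,C] \subset \R_+$ as a limit of nonnegative reals. I expect no serious obstacle here: the only place where the structural hypothesis $0 \notin co\{\varphi_i : i \in I\}$ is used jointly with the boundedness of $(\psi_h)_{h}$ is the derivation of the uniform bound $C$, and once that bound is available the extraction is routine because $I$ is finite.
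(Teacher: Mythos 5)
Your proof is correct, and it takes a genuinely different (and somewhat sharper) route than the paper at the key step. The paper argues by contradiction: if $\liminf_{h}\sum_{i\in I} r_i^h = +\infty$, normalizing by $\sum_{j\in I} r_j^h$ produces elements of $co\{\varphi_i : i\in I\}$ whose norms tend to $0$, forcing $0 \in co\{\varphi_i : i\in I\}$ (the convex hull of a finite family being closed), a contradiction; this only yields $\liminf_h \sum_i r_i^h < +\infty$, so the paper must first extract a subsequence $\tau$ along which the sums converge (hence are bounded) before applying Bolzano--Weierstrass finitely many times. You instead invoke the separation characterization of Lemma \ref{lem41}(iii) to produce $w$ with $\gamma := \min_{i\in I}\langle \varphi_i, w\rangle > 0$, and the estimate $\gamma \sum_{i\in I} r_i^h \leq \langle \psi_h, w\rangle \leq M\Vert w\Vert$ gives a uniform bound on all the $r_i^h$ valid for every $h$, with no contradiction argument and no preliminary extraction; the final finite iteration of Bolzano--Weierstrass is then common to both proofs. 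What your version buys is a cleaner, quantitative statement (boundedness of the whole sequence $(r_i^h)_h$, not just along a subsequence) and an explicit reuse of Lemma \ref{lem41}; what the paper's version buys is independence from the separation lemma, using only the closedness of the convex hull, and it is structurally parallel to the later, harder Lemma \ref{lem46}, where a uniform bound of your type is not as immediately available because of the sign-unrestricted coefficients $\lambda_j^h$.
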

\begin{proof}
First we prove that $\liminf_{h \rightarrow + \infty} \sum_{i \in I} r_i^h < + \infty$. We proceed by contradiction: we assume that $\liminf_{h \rightarrow + \infty} \sum_{i \in I} r_i^h = + \infty$. Therefore we have\\
 $\lim_{h \rightarrow + \infty} \sum_{i \in I} r_i^h = + \infty$. We set $s_i^h := \frac{r_i^h}{\sum_{j \in I} r_j^h} \in \R_+$. We have $\sum_{i \in I} s_i^h = 1$ and therefore $\sum_{i \in I} s_i^h \varphi_i \in co \{ \varphi_i : i \in I \}$. Note that $\Vert \sum_{i \in I} s_i^h \varphi_i \Vert = \frac{1}{\sum_{j \in I} r_j^h} \Vert \psi_h \Vert$ converges to $0$ when $h \rightarrow + \infty$ since $(\psi_h)_{h \in \N_*}$ is bounded. Therefore we have $\lim_{ h \rightarrow + \infty} \sum_{i \in I} s_i^h \varphi_i = 0$ which implies that $0 \in co \{ \varphi_i : i \in I \}$ that is a contradiction with one assumption. And so we have proven that  $s := \liminf_{h \rightarrow + \infty} \sum_{i \in I} r_i^h < + \infty$.
\vskip1mm
Now we can assert that there exists an increasing function $\tau : \N_* \rightarrow \N_*$ such that $\lim_{h \rightarrow + \infty} \sum_{i \in I} r^{\tau(h)}_i = s$. Therefore there exists $M \in \R_+$ such that $0 \leq \sum_{i \in I} r^{\tau(h)}_i \leq M$ for all $h \in \N_*$. 
Since for all $i \in I$, we have $0 \leq r^{\tau(h)}_i \leq \sum_{j \in I} r^{\tau(h)}_j \leq M$, i.e. the sequence $(r^{\tau(h)}_i)_{h \in \N_*}$ is bounded in $R_+$. Using several times the Bolzano-Weierstrass theorem we can assert that there exist an increasing function $\tau_1 : \N_* \rightarrow \N_*$ and $r^*_i \in \R_+$ for all $i \in I$, such that $\lim_{h \rightarrow + \infty} r_i^{\tau \circ \tau_1(h)} = r_i^*$. It suffices to take $\rho := \tau \circ \tau_1$. 
\end{proof}
\begin{theorem}\label{th43}
Let $(\underline{\hat{x}}, \underline{\hat{u}})$ be a solution of $(P^j_i)$ where $j \in \{ 1,2,3 \}$ and where the sets $U_t$ are defined by (\ref{eq41}). We assume that the following assumptions are fulfilled.
\begin{itemize}
\item[(i)] For all $t \in \N$, $\phi_t$ and $f_t$ are G\^ateaux differentiable at $(\hat{x}_t, \hat{u}_t)$.
\item[(ii)] For all $t \in \N$, for all $k \in \{1,...,m \}$, $g_t^k$ is G\^ateaux differentiable at $\hat{u}_t$.
\item[(iii)] For all $t \in \N$, for all $\alpha \in \{1,...,n \}$, $f_t^{\alpha}$ is lower semicontinuous at $(\hat{x}_t, \hat{u}_t)$ when $f_t^{\alpha}(\hat{x}_t, \hat{u}_t) > \hat{x}_{t+1}^{\alpha}$.
\item[(iv)] For all $t \in \N$, for all $k \in \{1,...,m \}$, $g^k_t$ is lower semicontinuous at $\hat{u}_t$ when $g^k_t(\hat{u}_t) > 0$.
\item[(v)] For all $t \in \N$, $0 \notin co \{ D_Gg_t^k(\hat{u}_t) : k \in I_t^s \}$ where $I_t^s := \{ k \in \{ 1,...,m \} : g_t^k(\hat{u}_t) = 0 \}$.
\item[(vi)] For all $t \in \N_*$, $D_{G,1}f_t(\hat{x}_t, \hat{u}_t)$ is invertible.
\item[(vii)]For all $t \in \N_*$ for all $\alpha, \beta \in \{1,...,n \}$, $\frac{\partial f^{\alpha}_t(\hat{x}_t, \hat{u}_t)}{\partial x^{\beta}} \geq 0$ and for all $\alpha \in \{1,...,n \}$, $\frac{\partial f^{\alpha}_t(\hat{x}_t, \hat{u}_t)}{\partial x^{\alpha}} > 0$.
\end{itemize}
Then, under (i-vi) or under (i-v) and (vii), there exist $\lambda_0 \in \R$, $(p_t)_{t \in \N_*} \in (\R^{n*})^{\N_*}$, $(\mu_t^1)_{t \in \N} \in \R^{\N}$, ..., and $(\mu_t^m)_{t \in \N} \in \R^{\N}$ which satisfy the following conditions. 
\begin{itemize}
\item[(NN)] $(\lambda_0, p_1) \neq (0,0)$.
\item[(Si)] $\lambda_0 \geq 0$, , $p_t \geq 0$ for all $t \in \N_*$, and $\mu_t^k \geq 0$ for all $t \in \N$ and for all $k \in \{1,...,m \}$.
\item[(S${\ell})$] For all $t \in \N$, for all $\alpha \in \{1, ...,n \}$, $p^{\alpha}_{t+1} \cdot( f^{\alpha}_t(\hat{x}_t, \hat{u}_t) - \hat{x}^{\alpha}_{t+ 1}) = 0$, and for all $k \in \{1,...,m \}$, $\mu_t^k \cdot g^k_t(\hat{u}_t) = 0$.
\item[(AE)] For all $t \in \N_*$, $p_t = p_{t+1} \circ D_{G,1}f_t(\hat{x}_t, \hat{u}_t) + \lambda_0 D_{G,1} \phi_t(\hat{x}_t, \hat{u}_t)$.
\item[(WM)] For all $t \in \N$, \\
$ p_{t+1} \circ D_{G,2}f_t(\hat{x}_t, \hat{u}_t) + \lambda_0 D_{G,2} \phi_t(\hat{x}_t, \hat{u}_t)  + \sum_{k=1}^m \mu_t^k D_Gg^k_t(\hat{u}_t) = 0.$ 
\end{itemize}
\end{theorem}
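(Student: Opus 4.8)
The plan is to follow the same scheme as in the proofs of Theorem \ref{th31} and Theorem \ref{th32}, replacing the finite-horizon multiplier rule of \cite{Bl4} by the version with mixed equality-inequality constraints on the controls. First I would apply Theorem \ref{th21} to reduce to the finite-horizon problems $(F^h_i)$, then rewrite $(F^h_i)$ as a static optimization problem in the variables $(x_1,\dots,x_h,u_0,\dots,u_h)$ exactly as in (\ref{eq31}), but now adjoining to the state inequality constraints $\psi_t^{\alpha}\ge 0$ the control inequality constraints $g_t^k(u_t)\ge 0$ for $t\in\{0,\dots,h\}$ and $k\in\{1,\dots,m\}$. Assumptions (i,ii,iii,iv) give the G\^ateaux differentiability and lower semicontinuity needed, and the Mangasarian-Fromowitz type condition (v), via Lemma \ref{lem41}, is exactly the qualification hypothesis for the static multiplier rule of \cite{Bl4} adapted to control constraints. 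This produces, for each $h\in\N_*$, multipliers $\lambda_0^h\ge 0$, $p_1^h,\dots,p_{h+1}^h\ge 0$ and $\mu_t^{k,h}\ge 0$ satisfying the finite-horizon analogues of (NN), (Si), (S$\ell$), together with the adjoint equation (\ref{eq35}) and the Weierstrass condition in the form $p_{t+1}^h\circ D_{G,2}f_t(\hat{x}_t,\hat{u}_t)+\lambda_0^h D_{G,2}\phi_t(\hat{x}_t,\hat{u}_t)+\sum_{k=1}^m\mu_t^{k,h}D_Gg_t^k(\hat{u}_t)=0$.

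Next I would run the normalization and compactness argument. As in Theorem \ref{th31}, under (vi) the adjoint equation can be inverted to express $p_{t+1}^h$ in terms of $p_t^h$ (equation (\ref{eq37})), which forces $(\lambda_0^h,p_1^h)\ne(0,0)$; under (vii) the same alternative monotonicity argument of Theorem \ref{th32} gives the a priori bounds (\ref{eq310})-(\ref{eq311}) on $\|p_t^h\|$. In either case one normalizes by $|\lambda_0^h|+\|p_1^h\|=1$, extracts a convergent subsequence of $(\lambda_0^h,p_1^h)$ by Bolzano-Weierstrass, and then recursively defines $p_{t+1}$ as the limit of $p_{t+1}^{\varphi(h)}$ using the inverted adjoint equation (respectively the Cantor diagonal process of \cite{BH}, Theorem A.1). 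This yields $\lambda_0$ and $(p_t)_{t\in\N_*}$ satisfying (NN), (Si) for the $p_t$, (S$\ell$) for the state part, and (AE), exactly as before.

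The genuinely new point, and the step I expect to be the main obstacle, is the construction of the control multipliers $(\mu_t^k)_{t\in\N}$ and the passage to the limit in the Weierstrass condition (WM). The difficulty is that the normalization $|\lambda_0^h|+\|p_1^h\|=1$ controls $\lambda_0^h$ and $p_1^h$, and hence (via the recursion) every $p_{t+1}$, but a priori gives no bound on the $\mu_t^{k,h}$. This is precisely what Lemma \ref{lem42} is designed to overcome: for each fixed $t$, the relation $\sum_{k=1}^m\mu_t^{k,h}D_Gg_t^k(\hat{u}_t)=-\bigl(p_{t+1}^h\circ D_{G,2}f_t(\hat{x}_t,\hat{u}_t)+\lambda_0^h D_{G,2}\phi_t(\hat{x}_t,\hat{u}_t)\bigr)$ exhibits the left-hand side as a nonnegative combination of the $D_Gg_t^k(\hat{u}_t)$ with the right-hand side a bounded (indeed convergent along the chosen subsequence) sequence in $(\R^d)^*$; moreover by the complementary slackness (S$\ell$) we may restrict the sum to $k\in I_t^s$, where by (v) we have $0\notin co\{D_Gg_t^k(\hat{u}_t):k\in I_t^s\}$. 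Hence Lemma \ref{lem42} applies and, after a further diagonal extraction over $t\in\N_*$, yields a subsequence along which $\mu_t^{k,h}\to\mu_t^k\in\R_+$ for every $t$ and every $k$ (setting $\mu_t^k:=0$ for $k\notin I_t^s$, which is consistent since then $\mu_t^{k,h}g_t^k(\hat u_t)=0$ with $g_t^k(\hat u_t)>0$ forces $\mu_t^{k,h}=0$). Passing to the limit in the finite-horizon Weierstrass condition gives (WM), while (Si) and (S$\ell$) for the $\mu_t^k$ follow from their finite-horizon counterparts by continuity. Care is needed to interleave the three extractions — the $(\lambda_0^h,p_1^h)$ limit, the diagonal extraction for $(p_t^h)_t$ in the case of hypothesis (vii), and the diagonal extraction for $(\mu_t^{k,h})_{t,k}$ — into a single nested sequence of subsequences, but this is routine once the bounds are in hand. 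Finally (AE) does not involve the $\mu_t^k$, so it is unaffected, and (NN) is inherited from the normalization as in Theorem \ref{th31}.
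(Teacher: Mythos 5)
Your overall scheme coincides with the paper's: reduction to finite horizon via Theorem \ref{th21}, application of the Fritz--John-type multiplier rule of \cite{Bl4} to the finite-horizon problem with the state inequalities and the control inequalities $g^k_t(u_t)\geq 0$ adjoined, normalization $\vert\lambda_0^h\vert+\Vert p_1^h\Vert=1$ and extraction of limits for $(\lambda_0^h,p_t^h)$ under (vi) as in Theorem \ref{th31} or under (vii) as in Theorem \ref{th32}, and finally, for each fixed $t$, convergence of the control multipliers via Lemma \ref{lem42}, using complementary slackness to restrict the sum to $I^s_t$ and hypothesis (v) there, with $\mu^k_t:=0$ off $I^s_t$. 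That last step, which you rightly single out as the new difficulty, is handled exactly as in the paper and is correct.

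There is, however, one step that fails as you state it: the finite-horizon nontriviality. You claim that under (vi) the inverted adjoint equation ``forces $(\lambda_0^h,p_1^h)\neq(0,0)$''. That inference was complete in Theorem \ref{th31} because there the whole multiplier vector was $(\lambda_0^h,p_1^h,\dots,p_{h+1}^h)$; here the vector produced by \cite{Bl4} also contains the $\mu^{k,h}_t$, so its nonvanishing could a priori be carried entirely by the $\mu$'s. The invertibility argument (or the monotonicity argument under (vii)) only yields the implication $(\lambda_0^h,p_1^h)=(0,0)\Rightarrow(\lambda_0^h,p_1^h,\dots,p_{h+1}^h)=(0,\dots,0)$; to get a contradiction one must then insert this into the finite-horizon stationarity condition in $u$, which reduces to $\sum_{k\in I^s_t}\mu^{k,h}_t D_G g^k_t(\hat u_t)=0$ after discarding the non-saturated indices by complementary slackness, and invoke Lemma \ref{lem41} together with (v) to conclude that all $\mu^{k,h}_t$ vanish, contradicting the full nontriviality of the multiplier rule. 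This is precisely what the paper does, and it is also the first place where (v) genuinely enters: contrary to your remark, (v) is not a constraint qualification needed to apply the rule of \cite{Bl4} (that rule needs none), it serves only in this nontriviality step and in the Lemma \ref{lem42} boundedness argument. Without this step the normalization $\vert\lambda_0^h\vert+\Vert p_1^h\Vert=1$ is not legitimate, so the gap must be filled; the ingredients are all already in your toolkit. (A minor slip besides: for this theorem the control sets involve inequalities only, so no ``mixed equality--inequality'' version of the multiplier rule is needed — that situation is Theorem \ref{th47}.)
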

\begin{proof}
From Theorem \ref{th21} (a) we know that, for all $h \in \N_*$, $(\hat{x}_0, ..., \hat{x}_{h+1}, \hat{u}_0, ..., \hat{u}_h)$ is a solution of the following finite-horizon problem.
\[
\left\{
\begin{array}{cl}
{\rm Maximize} & J(x_0,...,x_{h+1}, u_0,...,u_h) =  \sum_{t=0}^h \phi_t(x_t,u_t)\\
{\rm when} & \forall t \in \{ 0, ...,h \}, f_t(x_t,u_t) - x_{t+1} \geq 0\\
\null & \forall t \in  \{ 0, ...,h \}, x_t \in X_t\\
\null & x_0 = \sigma, x_{h+1} = \hat{x}_{h+1}\\
\null &  \forall t \in  \{ 0, ...,h \}, \forall k \in \{1,...,m \}, g^k_t(u_t) \geq 0.
\end{array}
\right.
\]
From Theorem 3.1 in \cite{Bl4} we can assert that there exists\\
 $(\lambda^h_0, p_1^h, ..., p_{h+1}^h, \mu_1^{1,h}, ..., \mu_h^{m,h}) \in \R \times (\R^{n*})^h \times \R^{mh}$ which satisfies the following assertions.
\begin{equation}\label{eq42}
(\lambda^h_0, p_1^h, ..., p_{h+1}^h, \mu_1^{1,h}, ..., \mu_m^{h,h}) \neq 0.
\end{equation}
\begin{equation}\label{eq43}
\left.
\begin{array}{l}
\lambda_0^h \geq 0, \forall t \in \{1,...,h+1 \}, p_t^h \geq 0,\\
 {\rm and} \; \forall t \in \{0, ..., h \}, \forall k \in \{ 1,...,m \}, \mu_t^{k,t} \geq 0.
\end{array}
\right\}
\end{equation}
\begin{equation}\label{eq44}
\left.
\begin{array}{l}
\forall t \in \{1,...,h+1 \}, \forall \alpha \in \{1, ...,n \}, p^{\alpha}_{t+1} \cdot( f^{\alpha}_t(\hat{x}_t, \hat{u}_t) - \hat{x}^{\alpha}_{t+ 1}) = 0,\\ {\rm and} \; \forall t \in \{0, ..., h \}, \forall k \in \{1,...,m \}, \mu_t^{k,h} \cdot g^k_t(\hat{u}_t) = 0.
\end{array}
\right\}
\end{equation}
\begin{equation}\label{eq45}
\left.
\begin{array}{l} 
\forall t \in \{1,...,h \},\\
 p_t^h = p_{t+1}^h \circ D_{G,1}f_t(\hat{x}_t, \hat{u}_t) + \lambda_0^h D_{G,1} \phi_t(\hat{x}_t, \hat{u}_t).
\end{array}
\right\}
\end{equation}
\begin{equation}\label{eq46}
\left.
\begin{array}{l} 
\forall t \in \{0,...,h \},\\
 p_{t+1}^h \circ D_{G,2}f_t(\hat{x}_t, \hat{u}_t) + \lambda_0^h D_{G,2} \phi_t(\hat{x}_t, \hat{u}_t) 
+\sum_{k=1}^m \mu_t^{k,h} D_Gg^k_t(\hat{u}_t) = 0.
\end{array}
\right\}
\end{equation} 
Using (\ref{eq45}) under (vi) or (vii) and working as in the proof of Theorem \ref{th31} or Theorem \ref{th32}, we obtain
$$(\lambda^h_0, p_1^h) = (0,0) \Longrightarrow (\lambda^h_0, p_1^h,...,p_{h+1}^h) = (0,0,...,0).$$
Proceeding by contradiction, assuming that $(\lambda^h_0, p_1^h) = (0,0) $, from the previous implication and (\ref{eq46}) we obtain $\sum_{k=1}^m \mu_t^{k,h} D_Gg^k_t(\hat{u}_t) = 0$, and then using Lemma \ref{lem41} we obtain that the $\mu_t^{k,h} = 0$. Therefore we obtain a contradiction with (\ref{eq42}). And so we have proven that $(\lambda^h_0, p_1^h) \neq (0,0)$. Under (vi) proceeding as in the proof of Theorem \ref{th31} and under (vii) proceeding as in the proof of Theorem \ref{th32} we obtain the existence of an increasing function $\rho : \N_* \rightarrow \N_*$ and of $\lambda_0 \in \R_+$ and of $(p_t)_{t \in \N_*} \in (\R^{n*}_+)^{\N_*}$  such that $\lambda_0 = \lim_{h \rightarrow + \infty} \lambda_0^{\rho(h)}$, $p_t = \lim_{h \rightarrow + \infty} p_t^{\rho(h)}$, $(\lambda_0, p_1) \neq (0,0)$, and $p_t^{\alpha} \cdot (f^{\alpha}_t(\hat{x}_t, \hat{u}_t) - \hat{x}^{\alpha}_{t+1}) = 0$ for all $t \in \N_*$ and for all $\alpha \in \{1,...,n \}$.
\vskip1mm
We fix $t \in \N$ and we consider, for all $h \in \N_*$, 
\[
\begin{array}{ccl}
\varphi_h & := & \sum_{k \in I^s_t} \mu_t^{k, \rho(h)} D_G g^k_t(\hat{u}_t) = \sum_{k=1}^m \mu_t^{k, \rho(h)} D_G g^k_t(\hat{u}_t) \\
\null & = & - ( p_{t+1}^{\rho(h)} \circ D_{G,2}f_t(\hat{x}_t, \hat{u}_t) + \lambda_0^{\rho(h)} D_{G,2} \phi_t(\hat{x}_t, \hat{u}_t)) .
\end{array}
\]
Therefore we have $\lim_{h \rightarrow + \infty} \varphi_h = - ( p_{t+1} \circ D_{G,2}f_t(\hat{x}_t, \hat{u}_t) + \lambda_0 D_{G,2} \phi_t(\hat{x}_t, \hat{u}_t)) $, and consequently the sequence $(\varphi_h)_{h \in \N_*}$ is bounded in $\R^{n*}$. Using Lemma \ref{lem42} we can assert that exist an increasing function $\rho_1 : \N_* \rightarrow \N_*$ and $\mu_t^1$, ..., $\mu_t^m \in \R_+$ such that $\lim_{h \rightarrow + \infty} \mu_t^{k, \rho \circ \rho_1(h)} = \mu_t^k \in \R_+$. And then the assertions (NN), (Si), (S${\ell}$), (AE) and (WM) are satisfied. 
\end{proof}
\vskip2mm
Now we consider the case where the sets of controls are defined by equalities and inequalities for each $t \in \N$,
\begin{equation}\label{eq47}
U_t =  \bigcap_{1 \leq k \leq m_i} \{ u \in \R^d : g^k_t(u) \geq 0 \} \cap ( \bigcap_{1 \leq k \leq m_e } \{ u \in \R^d : e_t^k(u) = 0 \})
\end{equation}
where $e_t^k : \R^d \rightarrow \R$.
\begin{lemma}\label{lem44}
Let $E$ be a real finite-dimensional normed vector space; let $J$ and $K$ be two nonempty finite sets, and let $(\psi^j)_{j \in J}$ and $(\varphi^k)_{k \in K}$ be two families of elements of the dual $E^*$. Then the two following assertions are equivalent.
\begin{itemize}
\item[(i)] $span \{ \psi^j : j \in J \} \cap  co \{\varphi^k : k \in K \} = \emptyset$.
\item[(ii)] There exists $w \in E$ such that $\langle \psi^j, w \rangle = 0$ for all $j \in J$ and $\langle \varphi^k, w \rangle  > 0$ for all $k \in K$.
\end{itemize}
\end{lemma}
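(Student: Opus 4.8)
\textbf{Proof proposal for Lemma \ref{lem44}.}

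The plan is to prove both implications directly, using a finite-dimensional Hahn--Banach separation argument for the nontrivial direction, much in the spirit of the proof of Lemma \ref{lem41}. The implication (ii) $\Rightarrow$ (i) is the easy one: if $w$ is as in (ii), then any element of $span\{\psi^j : j \in J\}$ is annihilated by the pairing with $w$, whereas any element $\varphi = \sum_{k \in K}\alpha_k\varphi^k$ of $co\{\varphi^k : k \in K\}$ satisfies $\langle \varphi, w\rangle = \sum_k \alpha_k\langle\varphi^k,w\rangle \geq \min_{k}\langle\varphi^k,w\rangle > 0$ since $\sum_k\alpha_k = 1$ and each $\langle\varphi^k,w\rangle > 0$. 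Hence the two sets cannot meet, which is (i). I would write this first since it is a two-line computation.

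For (i) $\Rightarrow$ (ii), the strategy is: first reduce to a separation statement, then use reflexivity of $E$ (finite-dimensional) to transfer the separating functional into a vector $w \in E$, exactly as in Lemma \ref{lem41}. Concretely, set $L := span\{\psi^j : j \in J\}$, a linear subspace of $E^*$, and $C := co\{\varphi^k : k \in K\}$, a nonempty compact convex subset of $E^*$. Assumption (i) says $L \cap C = \emptyset$. Since $L$ is closed (finite-dimensional) and $C$ is compact convex and disjoint from it, the Hahn--Banach separation theorem yields $\xi \in E^{**}$ and a real number $a$ with $\langle \xi, \ell\rangle \leq a < \langle\xi,\varphi\rangle$ for all $\ell \in L$ and all $\varphi \in C$. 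Because $0 \in L$ we get $a \geq 0$; and because $L$ is a subspace, $\langle\xi,\ell\rangle \leq a$ for all $\ell \in L$ forces $\langle\xi,\ell\rangle = 0$ for all $\ell \in L$ (otherwise scaling $\ell$ makes the left side arbitrarily large), so in particular $\langle\xi,\psi^j\rangle = 0$ for all $j \in J$ and $a \geq 0$ can be taken with $0 < \langle\xi,\varphi\rangle$ for all $\varphi \in C$, hence $\langle\xi,\varphi^k\rangle > 0$ for each $k \in K$. Now invoke reflexivity of the finite-dimensional space $E$: there is $w \in E$ with $\langle\xi,\varphi\rangle = \langle\varphi,w\rangle$ for all $\varphi \in E^*$. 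This $w$ satisfies $\langle\psi^j,w\rangle = 0$ for all $j \in J$ and $\langle\varphi^k,w\rangle > 0$ for all $k \in K$, which is (ii).

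The main obstacle, and the step deserving the most care, is the passage from ``$\langle\xi,\ell\rangle \leq a$ for all $\ell \in L$'' to ``$\langle\xi,\cdot\rangle$ vanishes on $L$ and $a$ may be taken to be $0$''. This is the standard fact that a linear functional bounded above on a linear subspace must vanish on it, but it must be stated explicitly because the raw output of the separation theorem only gives an inequality; once this is in hand, the strict inequality on $C$ survives with the threshold replaced by $a' := \inf_{\varphi \in C}\langle\xi,\varphi\rangle$, which is $> 0$ since $C$ is compact and $\langle\xi,\cdot\rangle$ is continuous and everywhere positive on $C$. Everything else (reflexivity, the convex-combination estimate) is routine and parallels Lemma \ref{lem41} verbatim.
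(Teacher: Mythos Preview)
Your proposal is correct and follows essentially the same route as the paper's proof: both directions are handled identically, with (ii) $\Rightarrow$ (i) via the convex-combination estimate $\langle\varphi,w\rangle \geq \min_k \langle\varphi^k,w\rangle > 0$, and (i) $\Rightarrow$ (ii) via Hahn--Banach separation of the closed subspace $L$ from the compact convex $C$, the scaling argument forcing $\xi$ to vanish on $L$, and reflexivity to produce $w \in E$. Your discussion of the ``main obstacle'' is slightly more explicit than the paper's, but the argument is the same.
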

\begin{proof} We set $S := span \{ \psi^j : j \in J \} $ and $C :=  co \{\varphi^k : k \in K \}$.\\
$[ i \Longrightarrow ii ]$  Under (i) using the theorem of separation of Hahn-Banach, there exist $\xi \in E^{**}$ and $a \in (0, + \infty)$ such that $\langle \xi, \psi \rangle \leq a$ for all $\psi \in S$, and $\langle \xi, \varphi \rangle > a$ for all $\varphi \in C$. When $\psi \in S$ is non zero, we have $\vert \langle \xi, \psi \rangle \vert  \leq a$ since $- \psi \in S$, and therefore, for all $\lambda \in \R$, we have $\vert \lambda \vert \cdot \vert \langle \xi, \psi \rangle \vert  \leq a$ which is impossible if $\vert \langle \xi, \psi \rangle \vert  \neq 0$, therefore we have $\langle \xi, \psi \rangle = 0$ for all $\psi \in S$. Since $E^{**}$ is isomorphic to $E$ there exists $w \in E$ such $\langle \xi, \chi \rangle = \langle \chi, w \rangle$ for all $\chi \in E^*$, and then we obtain (ii).\\
$[ ii \Longrightarrow i ]$ Under (ii) we define $a := \min_{k \in K} \langle \varphi^k, w \rangle > 0$. When $\varphi \in C$ there exists $(\theta_k)_{k \in K} \in \R_+^K$ such that $\sum_{k \in K} \theta_k = 1$ and $\sum_{k \in K} \theta_k \varphi^k = \varphi$. Then $\langle \varphi, w \rangle = \sum_{k \in K} \theta_k \langle \varphi^k, w \rangle \geq  \sum_{k \in K} \theta_k  \cdot a > 0$. When $\psi \in S$ there exists $(\zeta_j)_{j \in J} \in \R^J$ such that $\sum_{j \in J} \zeta_j \psi^j = \psi$. Therefore we have $\langle \psi, w \rangle =  
\sum_{j \in J} \zeta_j \langle \psi^j, w \rangle  = 0$. We have proven that $\langle \psi, w \rangle = 0$ for all $\psi \in S$ and $\langle \varphi, w \rangle > 0$ for all $\varphi \in C$, which implies (i).
\end{proof}

\begin{lemma}\label{lem45}
In the framework of Lemma \ref{lem44}, under condition (i) of Lemma \ref{lem44}, when $(\lambda_j)_{j \in J} \in \R^J$ and $(\mu_k)_{k \in K} \in \R_+^K$, we have 

$$\sum_{j \in J} \lambda_j \psi^j + \sum_{k \in K} \mu_k \varphi^k = 0 \Longrightarrow (\forall k \in K,\;  \mu_k = 0).$$
\end{lemma}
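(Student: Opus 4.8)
The plan is to reduce immediately to the separating vector furnished by Lemma \ref{lem44}. Under condition (i) of Lemma \ref{lem44}, that lemma gives us $w \in E$ such that $\langle \psi^j, w \rangle = 0$ for all $j \in J$ and $\langle \varphi^k, w \rangle > 0$ for all $k \in K$. This is the only nontrivial ingredient, and it is already available, so there is no real obstacle here.

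Now assume $(\lambda_j)_{j \in J} \in \R^J$ and $(\mu_k)_{k \in K} \in \R_+^K$ satisfy $\sum_{j \in J} \lambda_j \psi^j + \sum_{k \in K} \mu_k \varphi^k = 0$. The next step is to evaluate this identity of elements of $E^*$ at the vector $w$, obtaining
$$\sum_{j \in J} \lambda_j \langle \psi^j, w \rangle + \sum_{k \in K} \mu_k \langle \varphi^k, w \rangle = 0.$$
Since $\langle \psi^j, w \rangle = 0$ for every $j \in J$, the first sum vanishes, and we are left with $\sum_{k \in K} \mu_k \langle \varphi^k, w \rangle = 0$.

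Finally, for each $k \in K$ we have $\mu_k \geq 0$ and $\langle \varphi^k, w \rangle > 0$, hence each term $\mu_k \langle \varphi^k, w \rangle$ is nonnegative; a sum of nonnegative reals equal to $0$ forces each term to be $0$, i.e. $\mu_k \langle \varphi^k, w \rangle = 0$, and dividing by $\langle \varphi^k, w \rangle > 0$ yields $\mu_k = 0$. As $k \in K$ was arbitrary, this proves the implication. $\square$
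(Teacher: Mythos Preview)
Your proof is correct but takes a genuinely different route from the paper's. The paper argues by contraposition: assuming some $\mu_k \neq 0$, it sets $\overline{\mu} := \sum_{k \in K} \mu_k > 0$, normalizes to obtain $\sum_{k \in K} (\mu_k/\overline{\mu})\,\varphi^k \in co\{\varphi^k : k \in K\}$, and observes that this same element equals $-\sum_{j \in J} (\lambda_j/\overline{\mu})\,\psi^j \in span\{\psi^j : j \in J\}$, directly contradicting condition~(i). You instead invoke the equivalence $(\mathrm{i}) \Leftrightarrow (\mathrm{ii})$ of Lemma~\ref{lem44} to obtain the separating vector $w$, then evaluate at $w$ and use nonnegativity of the terms. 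The paper's argument is slightly more self-contained, using only condition~(i) at face value and not the Hahn--Banach step hidden in Lemma~\ref{lem44}; your argument is direct (no contraposition) and makes efficient reuse of the work already done in Lemma~\ref{lem44}. Both are short and entirely adequate here.
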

\begin{proof} We proceed by contraposition, we assume that there exists $k \in K$ such that $\mu_k \neq 0$. Then $\overline{\mu} := \sum_{k \in K} \mu_k > 0$ and so $\sum_{k \in K} \frac{\mu_k}{\overline{\mu}} \varphi^k \in co \{ \varphi^k : k \in K \}$ and $\sum_{k \in K} \frac{\mu_k}{\overline{\mu}} \varphi^k = -\sum_{j \in J}\frac{\lambda_j}{\overline{\mu}} \psi^j \in span\{\psi^j : j \in J \}$ which provides a contradiction
with condition (i).
\end{proof}
\begin{lemma}\label{lem46}
Let $E$ be a real finite-dimensional normed vector space; let $J$ and $K$ be two nonempty finite sets, and let $(\psi^j)_{j \in J}$ and $(\varphi^k)_{k \in K}$ be two families of elements of the dual $E^*$. We assume that the following assumptions are fulfilled.
\begin{itemize}
\item[(a)] The family $(\psi^j)_{j \in J}$ is linearly independent.
\item[(b)] $span \{ \psi^j : j \in J \} \cap co \{ \varphi^k : k \in K \} = \emptyset$.
\end{itemize}
Let $(\lambda^h_j)_{j \in J} \in \R^J$ and $(\mu^h_k)_{k \in K} \in \R_+^K$ for all $h \in \N_*$ such that the sequence $(\chi^h)_{h \in \N_*} := (\sum_{j \in J} \lambda^h_j \psi^j + \sum_{k \in K} \mu^h_k \varphi^k)_{h \in \N_*}$ is bounded in $E^*$. Then there exists an increasing function $\rho : \N_* \rightarrow \N_*$ such that the sequences $(\lambda^{\rho(h)}_j)_{h \in \N_*}$ are convergent in $\R$ for all $j \in J$ and the sequences and $(\mu^{\rho(h)}_k)_{h \in \N_*}$ are convergent in $\R_+$ for all $k \in K$.
\end{lemma}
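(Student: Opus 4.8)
~\ref{lem46}.

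\medskip

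The plan is to decompose the bounded sequence $(\chi^h)_{h \in \N_*}$ into its two ``parts'' — the span-part $\sum_{j \in J}\lambda^h_j\psi^j$ and the cone-part $\sum_{k \in K}\mu^h_k\varphi^k$ — and to show that each part is bounded in $E^*$; once that is done, the conclusion follows by applying Lemma~\ref{lem42} to the cone-part (to extract a subsequence along which each $\mu^h_k$ converges in $\R_+$) and then elementary linear algebra together with the Bolzano--Weierstrass theorem to the span-part (to further refine so that each $\lambda^h_j$ converges in $\R$). Note that assumption (a), the linear independence of $(\psi^j)_{j \in J}$, is exactly what is needed so that the span-part determines its coefficients $(\lambda^h_j)_{j \in J}$ uniquely and continuously: on the finite-dimensional subspace $S := \mathrm{span}\{\psi^j : j \in J\}$ the linear bijection $(\lambda_j)_{j\in J}\mapsto \sum_{j\in J}\lambda_j\psi^j$ has a continuous inverse, so a bound on $\sum_j \lambda^h_j\psi^j$ in $E^*$ yields a bound on each $\lambda^h_j$.

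\medskip

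The key steps, in order, are as follows. First I would argue by contradiction that the cone-part $(\sum_{k\in K}\mu^h_k\varphi^k)_{h\in\N_*}$ is bounded. Suppose not; then, passing to a subsequence, $\Vert\sum_{k\in K}\mu^h_k\varphi^k\Vert \to +\infty$, hence $\overline{\mu}^h := \sum_{k\in K}\mu^h_k \to +\infty$ as well (since the $\varphi^k$ are fixed). Dividing $\chi^h$ by $\overline{\mu}^h$ and using that $(\chi^h)$ is bounded, we get $\sum_{k\in K}\frac{\mu^h_k}{\overline{\mu}^h}\varphi^k + \sum_{j\in J}\frac{\lambda^h_j}{\overline{\mu}^h}\psi^j \to 0$. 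The first term lies in $co\{\varphi^k : k\in K\}$ and the second lies in $S$; after passing to a further subsequence along which the barycentric coefficients $\frac{\mu^h_k}{\overline{\mu}^h}$ converge (Bolzano--Weierstrass on the simplex) to some $(\mu^*_k)_{k\in K}$ with $\sum_k \mu^*_k = 1$, the convex-combination term converges to $\varphi^* := \sum_{k\in K}\mu^*_k\varphi^k \in co\{\varphi^k : k\in K\}$, and therefore the span term converges to $-\varphi^*$, which forces $-\varphi^* \in S$, i.e. $\varphi^* \in S$ (as $S$ is a subspace). This contradicts assumption (b). Hence the cone-part is bounded, and then so is the span-part $\sum_{j\in J}\lambda^h_j\psi^j = \chi^h - \sum_{k\in K}\mu^h_k\varphi^k$.

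\medskip

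Second, I would apply Lemma~\ref{lem42} to the bounded sequence $(\sum_{k\in K}\mu^h_k\varphi^k)_{h\in\N_*}$: its hypothesis $0\notin co\{\varphi^k : k\in K\}$ is a consequence of (b) (the zero vector belongs to $S$), so there is an increasing $\rho_0 : \N_* \rightarrow \N_*$ such that $(\mu^{\rho_0(h)}_k)_{h\in\N_*}$ converges in $\R_+$ for every $k\in K$. Third, along this subsequence the span-part $\sum_{j\in J}\lambda^{\rho_0(h)}_j\psi^j = \chi^{\rho_0(h)} - \sum_{k\in K}\mu^{\rho_0(h)}_k\varphi^k$ is still bounded; by (a) and the continuity of the inverse of the coordinate isomorphism $\R^J \to S$ described above, each sequence $(\lambda^{\rho_0(h)}_j)_{h\in\N_*}$ is bounded in $\R$, so finitely many applications of Bolzano--Weierstrass give an increasing $\rho_1 : \N_*\to\N_*$ with $(\lambda^{\rho_0\circ\rho_1(h)}_j)_{h\in\N_*}$ convergent for all $j\in J$. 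Taking $\rho := \rho_0\circ\rho_1$ finishes the proof. The main obstacle is the boundedness of the cone-part in Step~1; everything afterwards is routine extraction of subsequences. One technical point to be careful about there is that $\overline{\mu}^h\to+\infty$ genuinely follows from $\Vert\sum_k\mu^h_k\varphi^k\Vert\to+\infty$ only because $\Vert\sum_k\mu^h_k\varphi^k\Vert \le \overline{\mu}^h \max_{k\in K}\Vert\varphi^k\Vert$, and that when all $\varphi^k = 0$ the statement is vacuous since then $co\{\varphi^k:k\in K\} = \{0\} \subset S$ contradicts (b), so we may assume $\max_{k\in K}\Vert\varphi^k\Vert > 0$.
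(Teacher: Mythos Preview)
Your proof is correct and follows essentially the same route as the paper's: a contradiction argument using assumption (b) to bound the cone-part (the paper phrases this as $\liminf_{h}\sum_{k\in K}\mu^h_k < +\infty$, but the normalization and compactness step is identical), followed by subsequence extraction via Bolzano--Weierstrass for the $\mu_k$'s and linear independence (a) for the $\lambda_j$'s. The only cosmetic differences are the order of extraction (the paper handles the $\lambda$'s first, citing Lemma~5.5 of \cite{Bl3} in place of your continuous-inverse argument, then the $\mu$'s) and your explicit invocation of Lemma~\ref{lem42} for the $\mu$-part where the paper argues directly.
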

\begin{proof}
We set $S := span \{ \psi^j : j \in J \}$ and $C :=  co \{ \varphi^k : k \in K \}$. First we prove that $\liminf_{ h \rightarrow + \infty} \sum_{k \in K} \mu^h_k < + \infty$. We proceed by contradiction, we assume that $\liminf_{ h \rightarrow + \infty} \sum_{k \in K} \mu^h_k = + \infty$. Therefore we have $s := \lim_{h \rightarrow + \infty} \sum_{k \in K} \mu^h_k = + \infty$. We set $\pi^h_k := \frac{\mu^h_k}{\sum_{k' \in K} \mu^h_{k'}} \in \R_+$. We have $\sum_{k \in K} \pi^h_k = 1$, and therefore $\sum_{k \in K} \pi^h_k \varphi^k \in C$. Note that
$$\Vert \sum_{j \in J} \frac{\lambda^h_j}{\sum_{k' \in K} \mu^h_{k'}} \psi^j + \sum_{k \in K} \pi^h_k \varphi^k \Vert = \frac{1}{\sum_{k' \in K} \mu^h_{k'}} \Vert \chi^h \Vert \rightarrow 0$$
 when $h \rightarrow + \infty$, therefore 
$$\lim_{h \rightarrow + \infty} (\sum_{j \in J} \frac{\lambda^h_j}{\sum_{k' \in K} \mu^h_{k'}} \psi^j + \sum_{k \in K} \pi^h_k \varphi^k) = 0.$$
 Since $C$ is compact there exists an increasing function $\tau : \N_* \rightarrow \N_*$ and $\varphi_* \in C$ such that $\lim_{h \rightarrow + \infty} \sum_{k \in K} \pi^{\tau(h)}_k \varphi^k = \varphi_*$.  Consequently $\lim_{h \rightarrow + \infty} \sum_{j \in J} \frac{ - \lambda^{\tau(h)}_j}{\sum_{k' \in K} \mu^{\tau(h)}_{k'}} \psi^j = \varphi_*$. Since a finite-dimensional normed vector space is complete, $S$ is closed in $E^*$, and consequently we have $\varphi_* \in S$, and then $\varphi_* \in S \cap C$ which is a contradiction with assumption (b). And so we have proven that  $\liminf_{ h \rightarrow + \infty} \sum_{k \in K} \mu^h_k < + \infty$. Therefore there there exists an increasing function $r : \N_* \rightarrow \N_*$ such that $\lim_{h \rightarrow + \infty} \sum_{k \in K} \mu^{r(h)}_k = \liminf_{ h \rightarrow + \infty} \sum_{k \in K} \mu^h_k $. Therefore the sequence \\
$(\sum_{k \in K} \mu^{r(h)}_k)_{h \in \N_*})_{h \in \N_*}$ is bounded in $\R_+$. Since $0 \leq \mu^{r(h)}_k \leq \sum_{k \in K}\mu^{r(h)}_k$, we obtain that $(\mu^{r(h)}_k)_{h \in \N_*}$ is bounded in $\R_+$ for all $k \in K$. Therefore $(\sum_{k \in K} \mu^{r(h)}_k \varphi^k)_{h \in \N_*}$ is bounded in $E^*$. Therefore $(\sum_{j \in J} \lambda^{r(h)}_j \psi^j)_{h \in \N_*} = (\chi^{r(h)} - \sum_{k \in K} \mu^{r(h)}_k \varphi^k)_{h \in \N_*}$ is bounded as a difference of two bounded sequences. Under assumption (a) we can use Lemma 5.5 in \cite{Bl3} and assert that there exists an increasing function $r_1 : \N_* \rightarrow \N_*$ such $(\lambda^{r \circ r_1(h)}_j)_{h \in \N_*}$ is convergent in $\R$ for all $j \in J$. Using card$K$ times the Bolzano-Weierstrass theorem, there exists an increasing function $r_2 : \N_* \rightarrow \N_*$ such that $(\mu^{r \circ r_1 \circ r_2(h)}_k)_{h \in \N_*}$ is convergent in $\R_+$. Taking $\rho := r \circ r_1 \circ r_2$ we have proven the lemma.
\end{proof}
\begin{theorem}\label{th47}
Let $(\underline{\hat{x}}, \underline{\hat{u}})$ be a solution of $(P^j_i)$ where $j \in \{1,2,3 \}$ and where the sets $U_t$ are defined in (\ref{eq47}). We assume that the following assumptions are fulfilled for all $t \in \N$.
\begin{itemize}
\item[(i)] $\phi_t$ is Fr\'echet differentiable at $(\hat{x}_t, \hat{u}_t)$.
\item[(ii)] For all $\alpha \in \{ 1,..., n \}$, $f^{\alpha}_t$ is Fr\'echet differentiable at $(\hat{x}_t, \hat{u}_t)$ when\\
 $f^{\alpha}_t(\hat{x}_t, \hat{u}_t) = x^{\alpha}_{t+1}$.
\item[(iii)] For all $\alpha \in \{ 1,..., n \}$, $f^{\alpha}_t$ is lower semicontinuous and G\^ateaux differentiable at ($\hat{x}_t, \hat{u}_t)$ when $f^{\alpha}_t(\hat{x}_t, \hat{u}_t) > x^{\alpha}_{t+1}$.
\item[(iv)] For all $k \in \{ 1, ..., m_i \}$, $g^k_t$ is Fr\'echet differentiable at $\hat{u}_t$ when $g^k_t(\hat{u}_t) = 0$.
\item[(v)] For all $k \in \{ 1, ..., m_i \}$, $g^k_t$ is lower semicontinuous and G\^ateaux differentiable at $\hat{u}_t$ when $g^k_t(\hat{u}_t) > 0$.
\item[(vi)] For all $j \in \{ 1, ..., m_e \}$, $e^j_t$ is continuous on a neighborhood of $\hat{u}_t$ and Fr\'echet differentiable at $\hat{u}_t$.
\item[(vii)]$span \{ De^j_t(\hat{u}_t) : j \in \{ 1,..., m_e \} \} \cap co \{ D_Gg^k_t(\hat{u}_t) : k \in I^s_t \} = \emptyset$, where \\
$I^s_t := \{ k \in \{1,...,m_i \} : g^k_t(\hat{u}_t) = 0 \}$.
\item[(viii)] $De_t^1(\hat{u}_t)$, ..., $De^{m_e}(\hat{u}_t)$ are linearly independent.
\item[(ix)] For all $t \in \N_*$, $D_{G,1}f_t(\hat{x}_t, \hat{u}_t)$ is invertible.
\item[(x)]For all $t \in \N_*$ for all $\alpha, \beta \in \{1,...,n \}$, $\frac{\partial f^{\alpha}_t(\hat{x}_t, \hat{u}_t)}{\partial x^{\beta}} \geq 0$ and for all $\alpha \in \{1,...,n \}$, $\frac{\partial f^{\alpha}_t(\hat{x}_t, \hat{u}_t)}{\partial x^{\alpha}} > 0$.
\end{itemize}
Then under (i-ix) or under (i-viii) and (x) there exist $\lambda_0 \in \R$, $(p_t)_{t \in \N_*} \in (R^{n*})^{\N_*}$, $(\lambda_{1,t})_{t \in \N} \in \R^{\N}$, ..., $(\lambda_{m_e, t})_{ t \in \N} \in \R^{\N}$, $(\mu_{1,t})_{t \in \N} \in \R^{\N}$, ..., $(\mu_{m_i, t})_{ t \in \N}\in \R^{\N}$ which satisfy the following conditions.
\begin{itemize}
\item[(NN)] $(\lambda_0, p_1) \neq (0,0)$.
\item[(Si)] $\lambda_0 \geq 0$, $p_t \geq 0$ for all $t \in \N_*$, $\mu_{k,t} \geq 0$ for all $t \in \N$ and for all $k \in \{1,...,m_i \}$.
\item[(S${\ell}$)] For all $t \in \N$, for all $\alpha \in \{1,...,n \}$, $p_{t+1}^{\alpha} \cdot (f_t^{\alpha}(\hat{x}_t, \hat{u}_t) - x_{t+1}^{\alpha}) = 0$, and for all $k \in \{1,...,m_i \}$, $\mu_{k,t} \cdot g_t^k(\hat{u}_t) = 0$.
\item[(AE)] For all $t \in \N_*$, $p_t = p_{t+1} \circ D_{G,1} f_t(\hat{x}_t, \hat{u}_t) + \lambda_0 D_1 \phi_t (\hat{x}_t, \hat{u}_t)$.
\item[(WM)] For all $t \in \N$, \\
$p_{t+1} \circ D_{G,2} f_t(\hat{x}_t, \hat{u}_t) + \lambda_0 D_2 \phi_t (\hat{x}_t, \hat{u}_t) + \displaystyle\sum_{j=1}^{m_e}\lambda_{j,t} De^j(\hat{u}_t) + \displaystyle\sum_{k=1}^{m_i} \mu_{k,t} D_G g^k_t(\hat{u}_t) = 0.$
\end{itemize}
\end{theorem}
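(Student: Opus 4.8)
The plan is to follow exactly the pattern established in the proofs of Theorem \ref{th31}, Theorem \ref{th32} and Theorem \ref{th43}, combining the reduction to finite horizon with a finite-horizon multiplier rule and then a limit procedure. First I would invoke Theorem \ref{th21}(a): since $(\underline{\hat{x}}, \underline{\hat{u}})$ solves $(P^j_i)$, for each $h \in \N_*$ the truncation $(\hat{x}_0, \ldots, \hat{x}_{h+1}, \hat{u}_0, \ldots, \hat{u}_h)$ solves the finite-horizon problem $(F^h_i)$, which here I rewrite as a static maximization problem in the variables $(x_1, \ldots, x_h, u_0, \ldots, u_h)$ whose constraints are the inequalities $f_t^\alpha(x_t,u_t) - x_{t+1}^\alpha \geq 0$, the inequalities $g^k_t(u_t) \geq 0$ for $k \in \{1,\ldots,m_i\}$, and the equalities $e^k_t(u_t) = 0$ for $k \in \{1,\ldots,m_e\}$, with $x_0 = \sigma$ and $x_{h+1} = \hat{x}_{h+1}$ fixed. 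Assumptions (i)--(viii) are precisely what is needed so that the multiplier rule with equality and inequality constraints of \cite{Bl4} (the version handling both equalities and inequalities) applies at this solution: differentiability where constraints are active, lower semicontinuity plus G\^ateaux differentiability where inequality constraints are slack, continuity near $\hat{u}_t$ and Fr\'echet differentiability of the equality constraints, linear independence of the $De^j_t(\hat{u}_t)$, and the Mangasarian--Fromovitz-type separation condition (vii). This yields, for each $h$, multipliers $(\lambda_0^h, p_1^h, \ldots, p_{h+1}^h, \lambda_{j,t}^h, \mu_{k,t}^h)$ satisfying the nontriviality, sign, slackness, adjoint and weak-maximum relations in finite horizon.

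Next I would show $(\lambda_0^h, p_1^h) \neq (0,0)$. Using assumption (ix) (or (x)) exactly as in Theorem \ref{th31} (or Theorem \ref{th32}), invertibility of $D_{G,1} f_t(\hat{x}_t, \hat{u}_t)$ lets me solve the adjoint equation forward, so $(\lambda_0^h, p_1^h) = (0,0)$ forces $p_2^h = \cdots = p_{h+1}^h = 0$; then the finite-horizon weak-maximum relation at each $t$ becomes $\sum_j \lambda_{j,t}^h De^j_t(\hat{u}_t) + \sum_k \mu_{k,t}^h D_G g^k_t(\hat{u}_t) = 0$, and by Lemma \ref{lem45} (applied with the families $(De^j_t(\hat{u}_t))_j$ and $(D_G g^k_t(\hat{u}_t))_{k \in I^s_t}$, using (vii)) all $\mu_{k,t}^h$ vanish; the remaining equation $\sum_j \lambda_{j,t}^h De^j_t(\hat{u}_t) = 0$ with linear independence (viii) kills the $\lambda_{j,t}^h$ too, contradicting finite-horizon nontriviality. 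Hence $(\lambda_0^h, p_1^h) \neq (0,0)$, and I normalize $|\lambda_0^h| + \|p_1^h\| = 1$.

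Then comes the passage to the limit. Under (ix) I argue as in Theorem \ref{th31}: the normalized sequence $(\lambda_0^h, p_1^h)$ lives in a compact sphere, extract a convergent subsequence via Bolzano--Weierstrass, and reconstruct $p_{t+1}$ for all $t$ by the forward recursion $p_{t+1} = (p_t - \lambda_0 D_1 \phi_t(\hat{x}_t,\hat{u}_t)) \circ D_{G,1} f_t(\hat{x}_t,\hat{u}_t)^{-1}$, which is continuous in the data so commutes with the limit. Under (x) instead I argue as in Theorem \ref{th32}: set $\gamma_t := \min_\alpha \partial f^\alpha_t/\partial x^\alpha > 0$, use Lemma 2.3 of \cite{BH} to get $p^h_{t+1} \leq \frac{1}{\gamma_t}(p^h_t - \lambda_0^h D_1\phi_t)$, derive by induction a bound $\|p^h_t\| \leq \zeta_t$ uniform in $h \geq t-1$, and extract a subsequence $\rho$ by Cantor's diagonal process so that $p^{\rho(h)}_t \to p_t$ for every $t$. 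Either way I obtain $\lambda_0 \geq 0$, $(p_t)_{t \in \N_*}$ with $p_t \geq 0$, satisfying (NN), (Si), the first halves of (S$\ell$) and (AE). Finally, for each fixed $t \in \N$ I write $\chi_h := \sum_j \mu^{?}$\,—more precisely, $\sum_j \lambda_{j,t}^{\rho(h)} De^j_t(\hat{u}_t) + \sum_{k \in I^s_t} \mu_{k,t}^{\rho(h)} D_G g^k_t(\hat{u}_t) = -(p^{\rho(h)}_{t+1} \circ D_{G,2} f_t(\hat{x}_t,\hat{u}_t) + \lambda_0^{\rho(h)} D_2\phi_t(\hat{x}_t,\hat{u}_t))$ using slackness $\mu_{k,t}^h = 0$ for $k \notin I^s_t$; the right-hand side converges, hence $(\chi_h)_h$ is bounded, and Lemma \ref{lem46} (whose hypotheses (a),(b) are exactly (viii),(vii)) furnishes a further subsequence $\rho_1$ along which $\lambda_{j,t}^{\rho \circ \rho_1(h)} \to \lambda_{j,t} \in \R$ and $\mu_{k,t}^{\rho \circ \rho_1(h)} \to \mu_{k,t} \in \R_+$. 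Passing to the limit in the finite-horizon weak-maximum relation gives (WM), and passing to the limit in slackness gives the $g$-part of (S$\ell$); (Si) for the $\mu_{k,t}$ follows from their nonnegativity.

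The main obstacle, and the only genuinely delicate point beyond bookkeeping, is the diagonal-extraction step at the very end: the subsequence $\rho_1$ produced by Lemma \ref{lem46} depends on the fixed $t$, so to obtain sequences $(\lambda_{j,t})_{t}$ and $(\mu_{k,t})_t$ defined for all $t \in \N$ one must iterate over $t$ and diagonalize once more — this is the standard Cantor argument, already used in \cite{BH} and implicitly in the proof of Theorem \ref{th43}, and it works because the adjoint variables $p_t$ have already been fixed along a single subsequence $\rho$, so only the finitely-many-per-$t$ control multipliers need a further diagonalization. One should also take a little care that Lemma \ref{lem46} is applied with $K = I^s_t$ (the saturated inequality indices) rather than all of $\{1,\ldots,m_i\}$, which is legitimate since the slackness conditions force the non-saturated multipliers to be zero at every finite horizon.
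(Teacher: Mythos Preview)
Your proposal is correct and follows essentially the same route as the paper: reduction to finite horizon via Theorem \ref{th21}(a), application of the mixed equality/inequality multiplier rule of \cite{Bl4} (Theorem 3.2 there), the nontriviality argument for $(\lambda_0^h,p_1^h)$ via Lemma \ref{lem45} and (viii), extraction of $(\lambda_0,p_t)$ as in Theorem \ref{th31} or \ref{th32} according to whether (ix) or (x) holds, and finally Lemma \ref{lem46} to recover the control multipliers. Your remark about the need for a further diagonal extraction over $t$ when applying Lemma \ref{lem46} is well taken; the paper glosses over this point and simply asserts the existence of a single $r_1$ working for all $t$, so you are in fact being slightly more careful than the original.
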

\begin{proof}
From Theorem \ref{th21} (a) we know that, for all $h \in \N_*$, $(\hat{x}_0, ..., \hat{x}_{h+1}, \hat{u}_0,..., \hat{u}_h)$ is a solution of the following finite-horizon problem. 
\[
\left\{
\begin{array}{cl}
{\rm Maximize} & \sum_{t = 0}^h \phi_t(x_t,u_t)\\
{\rm when} & \forall t \in \{0,...,h \}, f_t(x_t,u_t) - x_{t+1} \geq 0\\
\null & \forall t \in \{0, ..., h+1 \}, x_t \in X_t\\
\null & x_0 = \sigma, x_{h+1} = \hat{x}_{h+1}\\
\null & \forall t \in \{ 0, ..., h \}, \forall j \in \{1,...,m_e \}, e^j_t(u_t) = 0\\
\null & \forall t \in \{ 0, ...,h \}, \forall k \in \{1,...,m_i \}, g^k_t(u_t) \geq 0.
\end{array}
\right.
\]
We introduce the following elements
\[
\begin{array}{l}
{\bf z} := (x_1,...,x_h, u_0, ...,u_h)\\
\Phi({\bf z}) := \sum_{t = 0}^h \phi_t(x_t,u_t)\\
F^{\alpha}_0({\bf z}) := f^{\alpha}_0(\sigma, u_0) - x^{\alpha}_1, 1 \leq \alpha \leq n\\
F^{\alpha}_t({\bf z}) := f^{\alpha}_t(x_t,u_t) - x^{\alpha}_t, 1 \leq t \leq h-1,  1 \leq \alpha \leq n\\
F^{\alpha}_h({\bf z}) := f^{\alpha}_h(x_h,u_h) - \hat{x}_{h+1}^{\alpha}, 1 \leq \alpha \leq n\\
E^j_t({\bf z}) := e^j_t(u_t), 0 \leq t \leq h, 1 \leq j \leq m_e\\
G^k_t({\bf z}) := g^k_t(u_t),  0 \leq t \leq h, 1 \leq k \leq m_i.
\end{array}
\]
Then the previous optimization problem can be written as follows.
\begin{equation}\label{eq48}
\left.
\begin{array}{cl}
{\rm Maximize} & \Phi({\bf z})\\
{\rm when} & \forall t \in \{0,...,h \}, \forall \alpha \in \{1,...,n \}, F^{\alpha}_t({\bf z}) \geq 0\\
\null & \forall t \in \{0,...,h \}, \forall j \in \{1,...,m_e \}, E^j_t({\bf z}) = 0\\
\null & \forall t \in \{0,...,h \}, \forall k \in \{ 1,...,m_i \}, G^k_t({\bf z}) \geq 0.
\end{array}
\right\}
\end{equation}
We see that our assumptions (i-vi) imply that the assumptions of Theorem 3.2 in \cite{Bl4} are fulfilled and consequently we obtain the existence of real numbers $\lambda_0^h$, $p_{t, \alpha}^h$ (for $t \in \{1,...,h+1 \}$ and $\alpha \in \{1,...,n \}$), $\lambda^h_{t,j}$ (for $t \in \{0,...,h \}$ and $j \in \{ 1,...,m_e \}$), $\mu^h_{t,k}$ (for $t \in \{0,...,h \}$ and $k \in \{1,..., m_i \}$) which satisfy the following conditions:
\begin{equation}\label{eq49}
(\lambda_0^h, p_{1,1}^h,..., p_{h+1,n}^h, \lambda^h_{1,0},..., \lambda^h_{m_e,h}, \mu^h_{1,0}, ..., \mu^h_{m_i, h}) \neq 0
\end{equation}
\begin{equation}\label{eq410}
\left.
\begin{array}{r}
\lambda^h_0 \geq 0, (\forall t \in \{1,...,h+1 \}, \forall \alpha \in \{1,...,n \}, p^h_{t, \alpha} \geq 0)\\
(\forall t \in \{0,...,h \}, \forall k \in \{1,...,m_i \}, \mu^h_{t,j} \geq 0)
\end{array}
\right\}
\end{equation}
\begin{equation}\label{eq411}
\forall t \in \{0,...,h \}, \forall \alpha \in \{1,...,n \}, p_{t+1, \alpha}^h \cdot F^{\alpha}_t({\bf \hat{z}}) = 0
\end{equation}
\begin{equation}\label{eq412}
\forall t \in \{0,...,h \}, \forall k \in \{1,...,m_i \}, \mu^h_{t,k} \cdot G^h_t({\bf \hat{z}}) = 0
\end{equation}
\begin{equation}\label{eq413}
\left.
\begin{array}{r}
\lambda^h_0 D \Phi({\bf \hat{z}}) + \sum_{\alpha = 1}^n p^h_{t, \alpha} D_GF^{\alpha}_t({\bf \hat{z}})\\
+ \sum_{j=1}^{m_e} \lambda^h_{t,j} D E^j_t({\bf \hat{z}}) + \sum_{k=1}^{m_i} \mu^h_{t,k} D_G G^k_t({\bf \hat{z}}) = 0.
\end{array}
\right\}
\end{equation}
Note that (\ref{eq411}) is translated by
\begin{equation}\label{eq414}
\forall t \in \{0,...,h \}, \forall \alpha \in \{1,...,n \}, p_{t+1, \alpha}^h \cdot (f^{\alpha}_t(\hat{x}_t, \hat{u}_t) - \hat{x}^{\alpha}_{t+1}) = 0.
\end{equation}
The condition (\ref{eq412}) is translated by
\begin{equation}\label{eq415}
\forall t \in \{0,...,h \}, \forall k \in \{ 1,...,mi \}, \mu_{t, k}^h \cdot g^h_t(\hat{u}_t) = 0.
\end{equation}
From (\ref{eq413}), using the partial differentiations with respect to $x_t$ we obtain, for all $\delta x_t \in \R^n$,
$$\lambda^h_0 D_1 \phi_t(\hat{x}_t, \hat{u}_t) \delta x_t + \sum_{\alpha = 1}^n p^h_{t+1, \alpha} D_{G,1} f^{\alpha}_t(\hat{x}_t, \hat{u}_t) \delta x_t - \sum_{\alpha = 1}^n p^h_{t+1, \alpha} \cdot \delta x_t +0+0 = 0$$
which implies, denoting by $p^h_t$ the elements of $\R^{n*}$ whose the coordinates are the $p^h_{t, \alpha}$, we obtain the following relation.
\begin{equation}\label{eq416}
\lambda^h_0 D_1 \phi_t(\hat{x}_t, \hat{u}_t) + p^h_{t+1} \circ D_{G,1}f_t (\hat{x}_t, \hat{u}_t)= p^h_t.
\end{equation}
From (\ref{eq413}) using the partial differentiations with respect to$u_t$ we obtain the following relation.
\begin{equation}\label{eq417}
\left.
\begin{array}{r}
\lambda^h_0 D_2 \phi_t(\hat{x}_t, \hat{u}_t) + p^h_{t+1} \circ D_{G,2}f_t (\hat{x}_t, \hat{u}_t)\\
+ \sum_{j=1}^{m_e} \lambda^h_{t,j} De^j_t(\hat{u}_t) + \sum_{k=1}^{m_i} \mu^h_{t,k} D_G g^k_t(\hat{u}_t) = 0.
\end{array}
\right\}
\end{equation}
Using (ix) and working as in the proof in the proof of Theorem \ref{th31} or using (x) and working as in the proof of Theorem \ref{th32}, from (\ref{eq416}) we obtain the following condition.
\begin{equation}\label{eq418}
(\lambda^h_0, p^h_1) = (0,0) \Longrightarrow (\lambda^h_0, p^h_1, ..., p^h_{h+1}) = (0,0, ...,0).
\end{equation}
If $(\lambda^h_0, p^h_1) = (0,0)$, using (\ref{eq418}), (\ref{eq417}) implies 
$$\sum_{j=1}^{m_e} \lambda^h_{t,j} De^j_t(\hat{u}_t) + \sum_{k=1}^{m_i} \mu^h_{t,k} D_G g^k_t(\hat{u}_t) = 0,$$
and using (\ref{eq415}) we obtain that $\mu^h_{t,k} = 0$ if $k \notin I^s_t$, and so we obtain the following relation $\sum_{j=1}^{m_e} \lambda^h_{t,j} De^j_t(\hat{u}_t) + \sum_{k \in I^s_t} \mu^h_{t,k} D_G g^k_t(\hat{u}_t) = 0$. Then using (vii) and Lemma \ref{lem45} we obtain that $\mu^h_{t,k} = 0$ for all $k \in I^s_t$, and consequently we have $\mu^h_{t,k} = 0$ for all $k \in \{1,...,m_i \}$. Therefore we have $\sum_{j=1}^{m_e} \lambda^h_{t,j} De^j_t(\hat{u}_t) = 0$.  Using (viii) we obtain $\lambda^h_{t,j} = 0$ for all $j \in \{1,...,m_e \}$. And so we have proven that  $(\lambda^h_0, p^h_1) = (0,0)$ implies $(\lambda_0^h, p_{1,1}^h,..., p_{h+1,n}^h, \lambda^h_{1,0},..., \lambda^h_{m_e,h}, \mu^h_{1,0}, ..., \mu^h_{m_i, h}) = (0, ...,0)$ which is a contradiction with (\ref{eq49}). And so we have proven the following condition.
\begin{equation}\label{eq419}
(\lambda^h_0, p^h_1) \neq (0,0).
\end{equation}
From (\ref{eq419}) under (ix) proceeding as in the proof of Theorem \ref{th31} or, under (x) proceeding as in the proof of Theorem \ref{th32} we obtain the existence of an increasing function $r : \N_* \rightarrow \N_*$, of $\lambda_0 \in \R$ and of $(p_t)_{t \in \N_*} \in (\R^{n*})^{\N_*}$ such that
\begin{equation}\label{eq420}
 \lim_{h \rightarrow + \infty} \lambda_0^{r(h)} = \lambda_0, (\forall t \in \N_*, \lim_{h \rightarrow + \infty}p_t^{r(h)} = p_t), (\lambda_0, p_1) \neq (0,0).
\end{equation}
From (\ref{eq420}) we see that the sequences $(\lambda_0^{r(h)})_{h \in \N_*}$ and $(p_t^{r(h)})_{h \in \N_*}$ are bounded and then, using (\ref{eq417}), we deduce that the sequence 
\[
\begin{array}{l}
(\sum_{j=1}^{m_e} \lambda^{r(h)}_{t,j} D e^j_t(\hat{u}_t) + \sum_{k=1}^{m_i} \mu^{r(h)}_{t,k} D_G g^k_t(\hat{u}_t))_{h \in \N_*} = \\
(\sum_{j=1}^{m_e} \lambda^{r(h)}_{t,j} D e^j_t(\hat{u}_t) + \sum_{k \in I^s_t} \mu^{r(h)}_{t,k} D_G g^k_t(\hat{u}_t))_{h \in \N_*}
\end{array}
\]
is bounded for all $t \in \N$. Using (vii), (viii) and Lemma \ref{lem46} we can assert that there exist an increasing function $r_1 : \N_* \rightarrow \N_*$, $\lambda_{t,j} \in \R$ (for all $t \in \N$ and for all $j \in \{1,..., m_e \}$), $\mu_{t,k} \in \R$ (for all $t \in \N$ and for all $k \in \{ 1,..., m_i \}$) such that 
\begin{equation}\label{eq421}
\lim_{h \rightarrow + \infty} \lambda^{r \circ r_1(h)}_{t,j} = \lambda_{t,j}, \lim_{h \rightarrow + \infty} \mu^{r \circ r_1(h)}_{t,k} = \mu_{t,k}. 
\end{equation}
\vskip1mm
Finally (\ref{eq420}) implies (NN), (\ref{eq420}), (\ref{eq421}) and (\ref{eq410}) imply (Si), (\ref{eq420}), (\ref{eq421}), (\ref{eq414}) and (\ref{eq415}) imply (S${\ell}$), 
(\ref{eq420}) and (\ref{eq416}) imply (AE), and (\ref{eq420}), (\ref{eq421}) and (\ref{eq417}) imply (WM).
\end{proof}
\begin{theorem}\label{th48}
Let $(\underline{\hat{x}}, \underline{\hat{u}})$ be a solution of $(P^j_e)$ where $j \in \{1,2,3 \}$ and where the sets $U_t$ are defined in (\ref{eq47}). We assume that the following assumptions are fulfilled for all $t \in \N$.
\begin{itemize}
\item[(i)] $\phi_t$ is Fr\'echet differentiable at $(\hat{x}_t, \hat{u}_t)$.
\item[(ii)] $f_t$ is continuous on a neighborhood of $(\hat{x}_t, \hat{u}_t)$ and Fr\'echet differentiable at $(\hat{x}_t, \hat{u}_t)$. 
\item[(iii)] For all $k \in \{ 1, ..., m_i \}$, $g^k_t$ is Fr\'echet differentiable at $\hat{u}_t$ when $g^k_t(\hat{u}_t) = 0$.
\item[(iv)] For all $k \in \{ 1, ..., m_i \}$, $g^k_t$ is lower semicontinuous and G\^ateaux differentiable at $\hat{u}_t$ when $g^k_t(\hat{u}_t) > 0$.
\item[(v)] For all $j \in \{ 1, ..., m_e \}$, $e^j_t$ is continuous on a neighborhood of $\hat{u}_t$ and Fr\'echet differentiable at $\hat{u}_t$.
\item[(vi)] $span \{ De^j_t(\hat{u}_t) : j \in \{ 1,..., m_e \} \} \cap co \{ D_Gg^k_t(\hat{u}_t) : k \in I^s_t \} = \emptyset$, where $I^s_t := \{ k \in \{1,...,m_i \} : g^k_t(\hat{u}_t) = 0 \}$.
\item[(vii)] $De_t^1(\hat{u}_t)$, ..., $De^{m_e}_t(\hat{u}_t)$ are linearly independent.
\item[(viii)] For all $t \in \N_*$, $D_{G,1}f_t(\hat{x}_t, \hat{u}_t)$ is invertible.

\end{itemize}
Then under (i-viii) there exist $\lambda_0 \in \R$, $(p_t)_{t \in \N_*} \in (R^{n*})^{\N_*}$, $(\lambda_{t,1})_{t \in \N} \in \R^{\N}$,..., $(\lambda_{t, m_e})_{ t \in \N} \in \R^{\N}$, $(\mu_{t,1})_{t \in \N} \in \R^{\N}$, ..., $(\mu_{t,m_i})_{ t \in \N}\in \R^{\N}$ which satisfy the following conditions.
\begin{itemize}
\item[(NN)] $(\lambda_0, p_1) \neq (0,0)$.
\item[(Si)] $\lambda_0 \geq 0$,  $\mu_{k,t} \geq 0$ for all $t \in \N$ and for all $k \in \{1,...,m_i \}$.
\item[(S${\ell}$)] For all $t \in \N$, for all $k \in \{1,...,m_i \}$, $\mu_{k,t} \cdot g_t^k(\hat{u}_t) = 0$.
\item[(AE)] For all $t \in \N$, $p_t = p_{t+1} \circ D_{G,1} f_t(\hat{x}_t, \hat{u}_t) + \lambda_0 D_1 \phi_t (\hat{x}_t, \hat{u}_t)$.
\item[(WM)] For all $t \in \N$, \\
$p_{t+1} \circ D_{G,2} f_t(\hat{x}_t, \hat{u}_t) + \lambda_0 D_2 \phi_t (\hat{x}_t, \hat{u}_t) + \displaystyle\sum_{j=1}^{m_e}\lambda_{t,j} De^j(\hat{u}_t) + \displaystyle\sum_{k=1}^{m_i} \mu_{t,k} D_G g^k_t(\hat{u}_t) = 0.$
\end{itemize}
\end{theorem}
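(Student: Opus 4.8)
The plan is to follow the proof of Theorem \ref{th47} step by step, the only structural change being that the difference \emph{equation} (De) furnishes \emph{equality} constraints $f_t(x_t,u_t)-x_{t+1}=0$ in the truncated problem in place of the inequalities of the (Di) case; accordingly the costate will carry no sign restriction, which is why no monotonicity hypothesis in the spirit of (vii)/(x) of Theorems \ref{th43} and \ref{th47} is needed, and invertibility (viii) alone drives the costate to the infinite horizon. First I would invoke Theorem \ref{th21}(b): a solution $(\underline{\hat x},\underline{\hat u})$ of $(P^j_e)$ truncates, for every $h\in\N_*$, to a solution $(\hat x_0,\dots,\hat x_{h+1},\hat u_0,\dots,\hat u_h)$ of $(F^h_e)$. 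Writing $\mathbf z:=(x_1,\dots,x_h,u_0,\dots,u_h)$, $\Phi(\mathbf z):=\sum_{t=0}^h\phi_t(x_t,u_t)$, $F^\alpha_t(\mathbf z):=f^\alpha_t(x_t,u_t)-x^\alpha_{t+1}$ (with $x_0=\sigma$ at $t=0$ and $x_{h+1}=\hat x_{h+1}$ at $t=h$), $E^j_t(\mathbf z):=e^j_t(u_t)$ and $G^k_t(\mathbf z):=g^k_t(u_t)$, the problem $(F^h_e)$ becomes a static problem of maximizing $\Phi$ under the equality constraints $F^\alpha_t(\mathbf z)=0$, $E^j_t(\mathbf z)=0$ and the inequality constraints $G^k_t(\mathbf z)\ge 0$. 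Assumptions (i)--(v) are exactly what is required for the differentiability and lower semicontinuity hypotheses of the mixed multiplier rule of \cite{Bl4} (Theorem 3.2 there, already used in the proof of Theorem \ref{th47}; note that $f_t$ must now be continuous near $(\hat x_t,\hat u_t)$ and Fr\'echet differentiable at it precisely because its coordinates enter equality constraints). This yields, for each $h$, reals $\lambda_0^h\ge 0$, $p^h_{t,\alpha}$ (unconstrained in sign), $\lambda^h_{t,j}$ (unconstrained in sign) and $\mu^h_{t,k}\ge 0$, not all zero, with complementary slackness $\mu^h_{t,k}\cdot g^k_t(\hat u_t)=0$ and stationarity of the Lagrangian at $\hat{\mathbf z}$.

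Differentiating that stationarity identity with respect to $x_t$ and to $u_t$ separately, exactly as in the passage from (\ref{eq413}) to (\ref{eq416}) and (\ref{eq417}), I would obtain the finite-horizon adjoint equation
\[ p^h_t = p^h_{t+1}\circ D_{G,1}f_t(\hat x_t,\hat u_t)+\lambda_0^h D_1\phi_t(\hat x_t,\hat u_t) \]
and the finite-horizon weak-maximum relation
\[ p^h_{t+1}\circ D_{G,2}f_t(\hat x_t,\hat u_t)+\lambda_0^h D_2\phi_t(\hat x_t,\hat u_t)+\sum_{j=1}^{m_e}\lambda^h_{t,j}De^j_t(\hat u_t)+\sum_{k=1}^{m_i}\mu^h_{t,k}D_Gg^k_t(\hat u_t)=0. \]
Next I would establish $(\lambda_0^h,p^h_1)\ne(0,0)$. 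By (viii) the adjoint equation solves forward, $p^h_{t+1}=(p^h_t-\lambda_0^h D_1\phi_t(\hat x_t,\hat u_t))\circ D_{G,1}f_t(\hat x_t,\hat u_t)^{-1}$, so $(\lambda_0^h,p^h_1)=(0,0)$ forces $p^h_t=0$ for all $t$; then, discarding the non-saturated inequalities via complementary slackness, the weak-maximum relation reduces to $\sum_j\lambda^h_{t,j}De^j_t(\hat u_t)+\sum_{k\in I^s_t}\mu^h_{t,k}D_Gg^k_t(\hat u_t)=0$. Lemma \ref{lem45} applied with (vi) gives $\mu^h_{t,k}=0$ for $k\in I^s_t$, hence for all $k$, so $\sum_j\lambda^h_{t,j}De^j_t(\hat u_t)=0$, and (vii) forces $\lambda^h_{t,j}=0$ for all $j$. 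This contradicts nontriviality of the full multiplier list, proving $(\lambda_0^h,p^h_1)\ne(0,0)$.

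Finally I would pass to the limit $h\to+\infty$. Normalizing $|\lambda_0^h|+\Vert p^h_1\Vert=1$, Bolzano--Weierstrass on the compact unit sphere of $\R\times\R^{n*}$ gives an increasing $\rho:\N_*\to\N_*$ with $\lambda_0^{\rho(h)}\to\lambda_0\ge 0$, $p^{\rho(h)}_1\to p_1$, $|\lambda_0|+\Vert p_1\Vert=1$; since $p^h_{t+1}$ depends continuously on $(p^h_t,\lambda_0^h)$ through the fixed invertible maps $D_{G,1}f_t(\hat x_t,\hat u_t)^{-1}$, recursion gives $p^{\rho(h)}_t\to p_t$ for every $t$, whence $(p_t)_{t\in\N_*}$ satisfies (AE) and $(\lambda_0,p_1)\ne(0,0)$ gives (NN). For the control multipliers I would fix $t$ and note that $-(p^{\rho(h)}_{t+1}\circ D_{G,2}f_t(\hat x_t,\hat u_t)+\lambda_0^{\rho(h)}D_2\phi_t(\hat x_t,\hat u_t))$ converges, so the sequence $\big(\sum_j\lambda^{\rho(h)}_{t,j}De^j_t(\hat u_t)+\sum_{k\in I^s_t}\mu^{\rho(h)}_{t,k}D_Gg^k_t(\hat u_t)\big)_h$ is bounded; Lemma \ref{lem46}, with (vii) playing the role of its hypothesis (a) and (vi) that of its hypothesis (b), provides (after a further extraction) limits $\lambda_{t,j}\in\R$ and $\mu_{t,k}\in\R_+$. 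Carrying this out successively over $t\in\N$ and taking a diagonal extraction produces $(\lambda_{t,j})$ and $(\mu_{t,k})$; passing to the limit in the sign conditions, in $\mu^h_{t,k}\cdot g^k_t(\hat u_t)=0$ and in the weak-maximum relation gives (Si), (S$\ell$) and (WM).

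The step I expect to be the main obstacle is the simultaneous convergence of the control multipliers: only G\^ateaux differentiability is available at the saturated/irrelevant points and the whole family of constraint differentials is \emph{not} assumed linearly independent, so one cannot bound each $\lambda^h_{t,j}$ and $\mu^h_{t,k}$ individually. The purpose of Lemma \ref{lem46} — fed by the separation hypothesis (vi) together with the linear independence (vii) of the \emph{equality} differentials only — is precisely to convert boundedness of the linear combination into boundedness, and then convergence, of the coefficients; checking that its hypotheses genuinely hold for every $t$ in a manner compatible with the diagonal extraction is the delicate point. The forward propagation of the costate to the infinite horizon, resting entirely on (viii), is the other place requiring care, but it is identical to the corresponding step in the proof of Theorem \ref{th31}.
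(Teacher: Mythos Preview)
Your proposal is correct and follows essentially the same approach as the paper, which simply says that the proof of Theorem~\ref{th48} is similar to that of Theorem~\ref{th47} with the inequality constraints $f_t(x_t,u_t)-x_{t+1}\ge 0$ replaced by equalities, whence the loss of the sign condition on $p_t$. You have in fact spelled out more carefully than the paper both why the continuity/Fr\'echet assumptions on $f_t$ are the right hypotheses for the equality-constraint multiplier rule of \cite{Bl4}, and the need for a diagonal extraction across $t$ when building $(\lambda_{t,j})$ and $(\mu_{t,k})$.
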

The proof of this theorem is similar to the this one of Theorem \ref{th47}. The difference is the replacement of inequality constraints by equality constraints in the problem issued from the reduction to finite horizon, the consequence of this difference is the lost of the sign of the adjoint variable $p_t$.

\end{document}